\newtheorem{Thm}[equation]{Theorem}
\newtheorem{Lem}[equation]{Lemma}
\newtheorem{Prop}[equation]{Proposition}
\newtheorem{Obs}[equation]{Observation}
\theoremstyle{remark}
\newtheorem{Example}[equation]{Example}
\numberwithin{equation}{section}
\newcommand{\R}{\mathbf R}           
\newcommand{\C}{\mathbf C}           
\newcommand{\N}{\mathbf N}           
\newcommand{\Ad}{\operatorname{Ad}}
\newcommand{\rank}{\operatorname{rank}}
\newcommand{\spn}{\operatorname{span}}
\newcommand{\supp}{\operatorname{supp}}
\newcommand{\ann}{\operatorname{ann}}
\newcommand{\f}[1]{{\mathfrak{#1}}}           
\newcommand{\fb}{{\mathfrak b}}
\newcommand{\fg}{{\mathfrak g}}
\newcommand{\fh}{{\mathfrak h}}
\newcommand{\fk}{{\mathfrak k}}
\newcommand{\fn}{{\mathfrak n}}
\newcommand{\fnbar}{{\overline{\mathfrak n}}}
\newcommand{\fp}{{\mathfrak p}}
\newcommand{\ga}{\alpha}
\newcommand{\gb}{\beta}
\newcommand{\gl}{\lambda}
\newcommand{\gD}{\Delta}
\renewcommand{\gg}{\gamma}
\newcommand{\gs}{\sigma}
\newcommand{\gt}{\theta}
\newcommand{\eps}{\varepsilon}
\newcommand{\Cal}{\mathcal}
\newcommand{\Eul}{\EuScript}
\newcommand{\fQ}{{\Eul Q}}
\newcommand{\fB}{{\mathfrak B}}
\newcommand{\eC}{{\EuScript{C}}}
\newcommand{\cD}{{\mathcal D}}
\newcommand{\cF}{\mathcal F}
\newcommand{\cM}{\mathcal M}
\newcommand{\cN}{{\mathcal N}}
\newcommand{\cO}{{\mathcal O}}
\newcommand{\cU}{\mathcal U}
\newcommand{\cW}{\mathcal W}
\newcommand{\Cl}{\Cal C\hskip-1pt\ell}
\newcommand{\Tbar}{\bar{T}}
\renewcommand{\bar}[1]{\overline{#1}}
\newcommand{\IP}[2]{\langle#1\,, #2\rangle}     
\newcommand{\T}{{\mathbf {T}}}
\begin{document}

\parskip=4pt 
\baselineskip=14pt

\title[Characteristic cycles]{Characteristic cycles of highest weight Harish-Chandra modules}

\author{R. Zierau}

\address{Oklahoma State University \\
   Mathematics Department \\
   Stillwater, Oklahoma 74078}
      \email{roger.zierau@okstate.edu}

\keywords{Harish-Chandra module, characteristic cycle, associated variety}

\begin{abstract} Characteristic cycles and leading term cycles of irreducible highest weight Harish-Chandra modules of regular integral infinitesimal character are determined.  In the simply laced cases they are irreducible, but in the nonsimply laced cases they are more complicated.
\end{abstract}


\maketitle




\section{Introduction}
Characteristic cycles play an important role in the representation theory of real reductive Lie groups.  For example they arise in the study of primitive ideals, the associated variety and character theory (e.g., \cite{BorhoBrylinski85}, \cite{Rossmann91} and \cite{SchmidVilonen00}).  In this article the characteristic cycles of highest weight Harish-Chandra modules having regular integral  infinitesimal character  are given explicitly.

The Harish-Chandra module of each irreducible representation $X$ of a reductive group $G_\R$  has  a `support' associated to it.  This support is the support of the $\cD$-module on the flag variety $\fB$ given by the localization of $X$ (\cite{BeilinsonBernstein81}, \cite{HechtMilicicSchmidWolf87}); it is the closure of a $K$-orbit in the flag variety $\fB$, where $K$ is the complexification of a maximal compact subgroup of $G_\R$.  The support may also be described in terms of the Langlands classification.  The characteristic cycle of $X$ is an element of  top degree in Borel-Moore homology of the conormal variety for the action of $K$ on $\fB$:
\begin{equation}\label{eqn:CC}
CC(X)=\sum m_\fQ[\bar{T_\fQ^*\fB}],
\end{equation}
with the sum over $K$-orbits $\fQ$ in $\fB$.   The `multiplicities' $m_\fQ$ are nonnegative integers.  The characteristic cycle of $X$ may be defined in terms of filtrations of the corresponding $\cD$-module (\cite{BorhoBrylinski85}).  Through the Riemann-Hilbert correspondence this is equivalent to the characteristic cycle of an $IC$ sheaf associated to $\supp(X)$ (\cite{Kashiwara85}).  The characteristic cycle depends  on the singularities of $\supp(X)\subset\fB$.

Boe and Fu (\cite{BoeFu97}) have computed  characteristic cycles of $IC$ sheaves for Schubert varieties in certain generalized grassmannian flag varieties for classical groups.  They  carry  out explicit geometric computations and use the combinatorics of the Kazhdan-Lusztig polynomials to obtain characteristic cycles.  They find that in the simply laced cases the characteristic cycles are irreducible (i.e., just one term appears in (\ref{eqn:CC})), otherwise they give a procedure to find the characteristic cycles.  It turns out that this work is related to computing characteristic cycles of highest weight Harish-Chandra modules (in a somewhat indirect way).

In the case of $G_\R=Sp(2n,\R)$, the most difficult case, the computation of characteristic cycles of highest weight Harish-Chandra modules is contained in \cite{BarchiniZierau17}.  The methods are quite different from those of \cite{BoeFu97} and the results are stated in a much different way (and it is not clear how to match up the statements).

\pagebreak
The first theorem in this article is the following
\begin{Thm}
If $G_\R$ is a simple group of hermitian type with simply laced Lie algebra, then for any irreducible highest weight Harish-Chandra module $X$ of regular integral infinitesimal character 
$$
CC(X)=[\bar{T_\fQ^*\fB}],
$$
where $\bar{\fQ}=\supp(X)$.
\end{Thm}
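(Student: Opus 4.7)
The strategy is to reduce the computation of $CC(X)$ to the characteristic cycle of an $IC$ sheaf on a Schubert variety in the Hermitian symmetric flag variety $G/P^-$, where $P^-$ is the parabolic with Levi factor $K$, and then to invoke the Boe-Fu theorem on irreducibility in the simply laced case. Since $G_\R$ is of Hermitian type, there exist opposite parabolics $P^\pm\subset G$ with common Levi $K$, and $G/P^-$ is a generalized flag variety of the type studied in \cite{BoeFu97} (a Grassmannian, orthogonal Grassmannian, quadric, or one of the two exceptional Hermitian varieties). Let $\pi\colon\fB\to G/P^-$ denote the canonical projection.

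First I would analyze which $K$-orbits $\fQ$ in $\fB$ can arise as supports of highest weight Harish-Chandra modules. The highest weight condition is essentially that $X$ is $\fp^+$-locally finite, which forces the localization $\cM$ of $X$ to be supported on a $K$-orbit closure $\overline{\fQ}$ that fibers (via $\pi$) over a single $K$-orbit closure $\overline{C}\subset G/P^-$. In the Hermitian setting it is classical that the $K$-orbits on $G/P^-$ coincide with the Schubert cells with respect to a Borel $B\subset P^-$, so $\overline{C}$ is a Schubert variety. I would establish that the restriction $\pi|_{\overline{\fQ}}\colon\overline{\fQ}\to\overline{C}$ is a smooth fibration (with fiber a smooth subvariety of $\pi^{-1}(eP^-)\cong K/K\cap B$), and that $\cM$ is, up to twist and shift, the pullback $\pi^{*}\cN$ of an irreducible $K$-equivariant $\cD$-module $\cN$ on $G/P^-$ whose underlying perverse sheaf is $IC(\overline{C})$.

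Next I would transfer characteristic cycles. Via the Riemann-Hilbert correspondence, $CC(X)$ equals the characteristic cycle of $IC(\overline{\fQ})$; under the smooth pullback along $\pi$, conormal components pull back to conormal components, so if
\begin{equation*}
CC(IC(\overline{C}))=\sum_{C'} m_{C'}\,[\overline{T^{*}_{C'}(G/P^-)}],
\end{equation*}
then
\begin{equation*}
CC(IC(\overline{\fQ}))=\sum_{C'} m_{C'}\,[\overline{T^{*}_{\fQ'}\fB}],
\end{equation*}
with $\fQ'$ the unique $K$-orbit in $\fB$ lying over $C'$ in the fibration. In the simply laced cases Boe and Fu \cite{BoeFu97} prove that $CC(IC(\overline{C}))=[\overline{T^{*}_{C}(G/P^-)}]$ is irreducible for every Schubert variety in these Hermitian flag varieties, so combining with the transfer gives $CC(X)=[\overline{T^{*}_{\fQ}\fB}]$.

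The main obstacle is making the reduction step rigorous: one must verify that $\overline{\fQ}$ really is a smooth bundle over $\overline{C}$ (or at least that $IC(\overline{\fQ})$ is genuinely a smooth pullback of $IC(\overline{C})$), that the assignment $\fQ'\leftrightarrow C'$ between potentially contributing orbits is a bijection preserving multiplicities, and that no additional conormal components from fiber directions can appear. A case-by-case verification across types $A$, $D$, $E$ may be needed to handle the precise parametrization of highest weight supports and the fiber geometry of $\pi$ over each Schubert cell.
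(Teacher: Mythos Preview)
Your reduction strategy is correct in spirit for the classical simply laced groups, but it diverges from the paper in two places, one a refinement and one a genuine gap.

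For the classical cases $SU(p,q)$, $SO_e(2,2n-2)$, $SO^*(2n)$, the paper also reduces to Boe--Fu, but via a route that sidesteps the ``main obstacle'' you yourself flag. Rather than projecting the $K$-orbit closure $\overline{\fQ}$ directly and verifying that $\pi|_{\overline{\fQ}}$ is a smooth fibration (which is not obvious for the closures), the paper works in the category $\cM_\rho(\fg,B)$ and uses two facts: (i) $w\in\cW$ if and only if $w^{-1}$ is \emph{maximal} in its $W_c$-coset, and (ii) $CC(L_w)$ is irreducible if and only if $CC(L_{w^{-1}})$ is (Borho--Brylinski). It is $Z_{w^{-1}}$, not $Z_w$, that is known to fiber smoothly over its image in $G/KP_+$, so Boe--Fu applies to $\pi(Z_{w^{-1}})$ and irreducibility transfers back via (ii). Your direct approach might be made to work, but it requires the fiber-geometry verification you identify as the obstacle; the paper's $w\mapsto w^{-1}$ maneuver avoids it entirely.

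The genuine gap is the exceptional cases. You invoke \cite{BoeFu97} uniformly across all simply laced Hermitian types, but Boe--Fu treat only the \emph{classical} Hermitian symmetric spaces; the paper states this explicitly. For $E_{6(-14)}$ and $E_{7(-25)}$ the paper does not use any fibration or pullback argument at all. Instead it argues by a finite case-check directly in $\cM_\rho(\fg,B)$: one lists all $27$ (resp.\ $56$) elements of $\cW$, computes their $\tau$-invariants, support dimensions and associated varieties, and uses these constraints together with McGovern's identity $m_{y,w}=m_{\T_{\ga\gb}(y),\T_{\ga\gb}(w)}$ (Proposition~\ref{prop:mcgovern}) to propagate vanishing of off-diagonal multiplicities from a few seed cases to all of $\cW$. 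Without an independent source for irreducibility of characteristic cycles of Schubert varieties in the two exceptional cominuscule varieties, your plan does not close in types $E_6$ and $E_7$.
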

In the classical cases this follows from \cite{BoeFu97}; we explain this in Section \ref{sec:sl}.  The proofs in the exceptional cases are contained in Section \ref{sec:exceptional}.  In the nonsimply laced cases the characteristic cycle can contain more that one term.  The case of $G_\R=Sp(2n,\R)$, as mentioned above, is contained in \cite{BarchiniZierau17}.  Here we state the result as a very simple algorithm.  In Appendix \ref{appendix:B} we use this algorithm to compute the fraction of highest weight Harish-Chandra modules (of a given regular integral infinitesimal character) having irreducible characteristic cycle.  We see that this fraction is very small as the rank increases.  For $SO_e(2,2n-1)$, we find the characteristic cycles and see that they are either irreducible or a sum of two terms, each with multiplicity one (Section \ref{sec:so}).

The methods used here are to apply a number of well-known general facts about characteristic cycles, Harish-Chandra cells and Weyl group representations, along with explicit computations.  We also compute certain singularities that occur in $\bar{\fQ}$ for several $K$-orbits $\fQ$ in $\fB$.

\section{Preliminaries}
The arguments in this article depend on a number of known  general facts about highest weight Harish-Chandra modules and characteristic cycles.  In this section we gather together the information needed.

A highest weight Harish-Chandra module is an irreducible Harish-Chandra module having a vector annihilated by all root vectors  $X_\ga$ for $\ga$ in some positive system of roots (for some Cartan subalgebra).  The simple groups for which infinite dimensional highest weight Harish-Chandra modules occur are those groups of hermitian type.

Let $G_\R$ be one of the simple groups of hermitian type.  Let $\fg$ be the complexification of its Lie algebra and let $\gt$ be a Cartan involution.  Complexifying  $\gt$ gives the complexified  Cartan decomposition  $\fg=\fk\oplus\fp$.  We let $K$ be the complexification of the corresponding maximal compact subgroup (so $\rm{Lie}(K)=\fk$).  It is a fact that for hermitian type groups $\rank(\fk)=\rank(\fg)$, so we may fix a Cartan subalgebra $\fh$ of $\fg$ that is contained in $\fk$.  The set of roots of $\fh$ in $\fg$ (resp., $\fk$) is denoted by $\gD$ (resp., $\gD_c$).  The Weyl group of $\fg$ (resp., $\fk$) is denoted by $W$ (resp., $W_c$).   The flag variety of $\fg$ is denoted by $\fB$. The nilpotent cone in $\fg$ is denoted by $\cN$.  We denote by $G$ the adjoint group for $\fg$.  Given a positive system $\gD^+$ in $\gD$ we use the usual notation of $\rho$ for half the sum of the positive roots.

An equivalent condition for a simple group $G_\R$ to be of hermitian type is that $\fp$ decomposes into the direct sum of two irreducible $\Ad(K)$-subrepresentations. This is written as $\fp=\fp_+\oplus\fp_-$.  It is well-known that a highest weight Harish-Chandra module is highest weight with respect to a positive system (for Cartan subalgebra $\fh$) of the form $\gD_c^+\cup\gD(\fp_\pm)$, where $\gD(\fp_\pm):=\{\ga\in\gD:\fg^{(\ga)}\subset\fp_\pm\}$ and $\gD_c^+ $ is any positive system for $\gD_c$.  Since there is no loss of generality, we assume our highest weight Harish-Chandra modules have highest weight with respect to $\gD^+=\gD_c^+\cup\gD(\fp_+)$.  Therefore, each highest weight Harish-Chandra module is the irreducible quotient of a generalized Verma module 
$$\cU(\fg)\underset{\cU(\fk+\fp_+)}{\otimes} F_\gl,
$$
with $F_\gl$ an irreducible representation of $\fk$ of highest weight $\gl$ (with respect to $\gD_c^+$).  Those of infinitesimal character $\rho$ may be written as  $L_w$, with $\gl=-w\rho-\rho$, for $w$ in 
$$\cW:=\{w\in W:-w\rho\text{ is $\gD_c^+$-dominant}\}.
$$

It is well-known that the associated variety of any highest weight Harish-Chandra module is contained in $\fp_+$ (\cite{NishiyamaOchiaiTaniguchi01}).  The $K$-orbits in $\fp_+$ are $\cO_j$, $j=0,1,2,\dots,r:=\rank_\R(G_\R)$:
\begin{equation*}
\cO_0=\{0\},\text{ and } \cO_j=K\cdot(X_{\gg_1}+\dots +X_{\gg_j}), j=1,2,\dots,r,
\end{equation*}
where $\{\gg_1,\dots,\gg_r\}$ is a maximal set of strongly orthogonal roots in $\fp_+$ and $X_{\gg}\in\fg$ is a $\gg$-root vector.  For example, when $G_\R=Sp(2n,\R)$, $\fp_+\simeq \operatorname{Sym}(n,\C)$ and $\cO_j$ is the  set of rank $j$ symmetric matrices.  In general these orbits have the following inclusion relations
\begin{equation}\label{eqn:inclusions}
\cO_0\subset\bar{\cO_1}\subset \bar{\cO_2}  \subset \cdots \subset\bar{\cO_r}=\fp_+.
\end{equation}
It follows that each highest weight Harish-Chandra module has associated variety equal to one of the $\bar{\cO_j}$.

Now we state some general facts about characteristic cycles.  By the translation principle, in order to determine the characteristic cycles of highest weight Harish-Chandra modules of regular integral infinitesimal character, it suffices to consider only those of infinitesimal character $\rho$, that is, the $L_w, w\in\cW$.  Thus, we work with these representations.

The first observation is that the $L_w, w\in \cW$, may be studied in either the category $\cM_\rho(\fg,K)$ of Harish-Chandra modules of infinitesimal character $\rho$ or in $\cM_\rho(\fg,B)$, a category of highest weight $(\fg,B)$-modules of infinitesimal character $\rho$, where $B$ is the Borel subgroup corresponding to the positive system $\gD^+=\gD_c^+\cup\gD(\fp_+)$.  Characteristic cycles in both settings are studied simultaneously in \cite{BorhoBrylinski85}.  For Harish-Chandra modules the characteristic cycles are in terms of the conormal variety for the $K$-action on $\fB$, as stated above.  In $\cM_\rho(\fg,B)$ the support of the irreducible module $L_w$ is the Schubert variety $Z_w=\bar{Bw\cdot\fb}$ and the characteristic cycle is of the form
$$
CC(L_w)=\sum_{y\in W}m_{y,w}[\bar{T_{B_y}^*\fB}],
$$
where $B_y=By\cdot\fb\subset \fB$.  Let $\mu:T^*\fB\to \cN$ be the moment map.  If $\fQ$ is a $K$-orbit in $\fB$, then $\mu(\bar{T_\fQ^*\fB})$ is the closure of a $K$-orbit in $\cN\cap\fp$.  If we set $\fn^w:=\Ad(w)(\fn)$, then $\mu(\bar{T_{B_w}^*\fB})=B\cdot(\fn\cap\fn^w)$, an orbital variety.    These are general statements.  For highest weight Harish-Chandra modules one sees that the geometric objects arising in the two categories coincide, as the following shows.  Recall that $B$ is a Borel subgroup corresponding to $\gD^+=\gD_c^+\cup\gD(\fp_+)$, for some fixed (choice of) positive system in $\gD_c^+$.
\begin{Lem}[\cite{BarchiniZierau17}]  The following are equivalent.
\begin{enumerate}[label=\upshape{(\arabic*)}]
\item $Z_w=\bar{B_w}$ is the closure of a $K$-orbit in $\fB$.
\item $B\cdot(\fn\cap\fn^w)$ is the closure of a $K$-orbit in $\fp_+$.
\item $-w\rho-\rho$ is $\gD_c^+$-dominant.
\end{enumerate}
\end{Lem}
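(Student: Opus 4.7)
The plan is to identify each of (1), (2), and (3) with the single root-theoretic condition
\[
(*)\qquad w^{-1}\gD_c^+\subset -\gD^+,\qquad\text{equivalently,}\qquad \gD_c^+\cap w\gD^+=\emptyset.
\]
The equivalence (3)$\Leftrightarrow(*)$ is immediate: because $\rho$ is $\gD^+$-regular, for each simple $\ga\in\gD_c^+$ the inequality $\IP{-w\rho-\rho}{\ga^\vee}\ge 0$ rewrites as $\IP{\rho}{w^{-1}\ga^\vee}\le -1$, hence as $w^{-1}\ga\in -\gD^+$; since the simple compact roots generate $\gD_c^+$ in the nonnegative integer cone, this extends to all of $\gD_c^+$, which is $(*)$.

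For (2)$\Leftrightarrow(*)$, the root-space decomposition
\[
\fn\cap\fn^w=\bigoplus_{\ga\in\gD^+\cap w\gD^+}\fg^{(\ga)}
\]
shows that $\fn\cap\fn^w\subset\fp_+$ iff $\gD^+\cap w\gD^+\subset\gD(\fp_+)$, iff $(*)$. If (2) is assumed, the orbital variety already lies in $\fp_+$, forcing $\fn\cap\fn^w\subset\fp_+$ and hence $(*)$. Conversely, under $(*)$, abelianness of $\fp_+$ makes the factor $\exp(\fp_+)\subset B$ act trivially on $\fp_+$, so $B\cdot(\fn\cap\fn^w)=B_c\cdot(\fn\cap\fn^w)\subset\fp_+$, where $B_c=B\cap K$ is a Borel of $K$. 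The result is an irreducible, $B_c$-stable subvariety of $\fp_+$; using the Lagrangian dimension $\tfrac{1}{2}\dim\cO_w$ of orbital varieties (with $\cO_w$ the $G$-orbit through a generic element of $\fn\cap\fn^w$), and matching this against the known dimensions of the totally ordered $K$-orbits $\cO_0\subset\dots\subset\cO_r=\fp_+$, identifies $\bar{B\cdot(\fn\cap\fn^w)}$ as a unique $\bar{\cO_j}$.

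For (1)$\Leftrightarrow(*)$, I pass through the moment map $\mu:T^*\fB\to\cN$ and the identifications
\[
\mu\bigl(\bar{T^*_{Bw\fb}\fB}\bigr)=\bar{B\cdot(\fn\cap\fn^w)},\qquad \mu\bigl(\bar{T^*_\fQ\fB}\bigr)=\bar{K\cdot(\fp\cap\fn^w)}
\]
with $\fQ=K\cdot w\fb$, as recalled in the preceding text. If (1) holds, then $\bar{Bw\fb}=\bar{\fQ}$ is a $K$-orbit closure, so the two top-dimensional conormal closures over $\bar{\fQ}$ coincide; pushing through $\mu$ places the orbital variety in $\fp$, and then (since it also lies in $\fn$) in $\fp\cap\fn=\fp_+$, yielding $(*)$. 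Conversely, assume $(*)$; then (3) holds, so $L_w$ is a highest weight Harish-Chandra module, and its $\cD$-module support is both the Schubert variety $Z_w$ and, being the support of a Harish-Chandra module, a $K$-orbit closure, which is (1). The subtlest step is precisely this last one---promoting the algebraic condition $(*)$ to the global geometric statement that $Z_w$ is $K$-stable---and I would handle it uniformly via the Borho--Brylinski formalism of \cite{BorhoBrylinski85}, which parameterizes supports in $\cM_\rho(\fg,B)$ and $\cM_\rho(\fg,K)$ in parallel and ensures that the $K$-equivariance of the Beilinson--Bernstein localization of $L_w$ (forced by (3)) manifests as $K$-stability of its support $Z_w$.
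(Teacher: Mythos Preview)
The paper does not prove this lemma; it is quoted from \cite{BarchiniZierau17} without argument, so there is no in-paper proof to compare against. Your approach---reducing each of (1), (2), (3) to the root-theoretic criterion $(*)$---is sound, and the equivalences $(3)\Leftrightarrow(*)$, $(2)\Rightarrow(*)$, $(1)\Rightarrow(*)$, and $(*)\Rightarrow(1)$ are all correctly argued.

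The one soft spot is your direct argument for $(*)\Rightarrow(2)$. Saying that an irreducible $B_c$-stable subvariety of $\fp_+$ of the ``right'' Lagrangian dimension $\tfrac12\dim\cO_w$ must coincide with some $\bar{\cO_j}$ tacitly uses the equality $\dim\cO_j=\tfrac12\dim(G\cdot\cO_j)$ for the $K$-orbits in $\fp_+$; this is true in the hermitian case but is an extra input you do not state. You can sidestep this entirely. You already have $(*)\Rightarrow(3)\Rightarrow(1)$ via localization, and your moment-map argument for $(1)\Rightarrow(*)$ in fact yields $(1)\Rightarrow(2)$ along the way: once $\bar{T^*_{B_w}\fB}=\bar{T^*_\fQ\fB}$ is $K$-stable, its $\mu$-image is a $K$-stable irreducible closed subvariety of $\fp_+$, and since the $K$-orbits in $\fp_+$ are totally ordered by closure, any such variety is automatically a single $\bar{\cO_j}$. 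Thus the cycle $(*)\Rightarrow(1)\Rightarrow(2)\Rightarrow(*)$ closes without any dimension comparison, and your separate dimension argument for $(*)\Rightarrow(2)$ becomes unnecessary.
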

Therefore we may give our statements and computations for highest weight Harish-Chandra modules in terms of either $K$-orbits in $\fB$ and in $\cN\cap\fp$ \emph{or} in terms of Schubert varieties and orbital varieties.

We now review  facts we need about characteristic cycles.  A reference for much of this is \cite{BorhoBrylinski85}.
\begin{Thm}\label{thm:cc} If $X$ is an irreducible Harish-Chandra module with support $\bar{\fQ}$, then 
$$
CC(X)=\sum m_{\fQ'} [\bar{T_{\fQ'}^*\fB}],
$$
with the multiplicities satisfying
\begin{enumerate}[label=\upshape{(\arabic*)}]
\item $m_\fQ=1$ and 
\item $m_{\fQ'}\neq 0$ only when $\fQ'\subset\bar{\fQ}$.
\end{enumerate}
\end{Thm}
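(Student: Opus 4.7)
The plan is to extract both assertions from the D-module description of characteristic cycles, which is the framework of \cite{BorhoBrylinski85}. Let $\cM$ denote the irreducible $K$-equivariant holonomic D-module on $\fB$ obtained by Beilinson-Bernstein localization of $X$; by construction $\supp(\cM)=\bar{\fQ}$. Fix a good filtration $F_\bullet\cM$ and form the associated coherent sheaf $\gr_F\cM$ on $T^*\fB$. The characteristic variety $\Char(\cM)=\supp(\gr_F\cM)$ is conic, $K$-invariant, and (by holonomicity of $\cM$) Lagrangian, hence decomposes as a union of closures of conormal bundles $\bar{T^*_{\fQ'}\fB}$, $\fQ'$ a $K$-orbit in $\fB$. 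The characteristic cycle records these components with multiplicities equal to the generic length of $\gr_F\cM$ along each, and this is independent of the choice of good filtration.

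For (2): under the projection $\pi:T^*\fB\to\fB$ one has $\pi(\bar{T^*_{\fQ'}\fB})=\bar{\fQ'}$ and $\pi(\Char(\cM))=\supp(\cM)=\bar{\fQ}$. Therefore any orbit $\fQ'$ with $m_{\fQ'}\neq 0$ satisfies $\bar{\fQ'}\subset\bar{\fQ}$, i.e.\ $\fQ'\subset\bar{\fQ}$. This is essentially formal once the characteristic variety has been identified above.

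For (1): restrict $\cM$ to the open smooth orbit $\fQ\subset\bar{\fQ}$. The restriction is a $K$-equivariant integrable connection on $\fQ$, and irreducibility of $X$ forces it to be an irreducible $K$-equivariant local system $\gL$. The multiplicity $m_\fQ$ is then exactly $\rank(\gL)$, computed as the length of $\gr_F\cM$ at the generic point of $T^*_\fQ\fB$. For the modules of interest here---the highest weight Harish-Chandra modules $L_w$, $w\in\cW$---the Beilinson-Bernstein parametrization matches $L_w$ with the intermediate extension of the trivial rank-one local system on the corresponding open $K$-orbit, so $m_\fQ=1$.

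The main obstacle is the rank-one assertion in (1): the shape of the cycle and part (2) are general features of characteristic cycles of irreducible holonomic D-modules, but $m_\fQ=1$ requires identifying the equivariant local system on the open orbit. This reflects the special structure of the $K$-orbits carrying the $L_w$ in hermitian symmetric flag varieties, whose $K$-equivariant fundamental groups admit only the trivial one-dimensional representation.
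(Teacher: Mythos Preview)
The paper does not prove this theorem; it is recorded in the Preliminaries section as a known fact, with \cite{BorhoBrylinski85} given as the reference. Your sketch is exactly the standard $\cD$-module argument underlying that citation, so in that sense you are in line with what the paper intends.

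That said, there is a wobble in your treatment of (1). The theorem is stated for an arbitrary irreducible Harish-Chandra module $X$, yet in the crucial step you retreat to the special case of the highest weight modules $L_w$ and invoke triviality of the equivariant fundamental group on the relevant $K$-orbits. This leaves the general assertion unproved. The correct general argument is: $m_\fQ$ equals the rank of the irreducible $K$-equivariant local system on $\fQ$ to which $\cM$ restricts, and such local systems are classified by irreducible representations of the component group of the isotropy subgroup $K_x$. For $K$-orbits on the flag variety arising from a real reductive group these component groups are finite abelian (in fact elementary abelian $2$-groups), so every irreducible equivariant local system has rank one. That is the input you need for (1) in full generality; ``only the trivial one-dimensional representation'' is stronger than necessary and not what is being used. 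If you prefer to stay within the highest weight setting, you can instead pass to $\cM_\rho(\fg,B)$ as the paper does: there the open $B$-orbit $B_w$ is an affine cell, hence simply connected, and $m_{w,w}=1$ is immediate.

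Your argument for (2) is fine and is the standard one.
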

\noindent A similar statement holds for irreducibles in $\cM_\rho(\fg,B)$.

The associated variety $A(X)$ of $X$ (\cite{Vogan91}) is given in terms of the characteristic cycle by the following formulas.  Let $\mu:T^*\fB\to \cN$ be the moment map.  Then
\begin{subequations}\label{eqn:av}
\begin{equation}
AV(X)=\bigcup_{m_{\fQ'}\neq 0} \mu(\bar{T_{\fQ'}^*\fB})
\end{equation}
and
\begin{equation}
AV(L_w)=\bigcup_{m_{y,w}\neq 0} \mu(\bar{T_{B_y}^*\fB}).
\end{equation}
\end{subequations}
This is an equidimensional affine variety  (in $\cN\cap\fp$).  The \emph{leading term cycle} (\cite{Trapa07}) is 
$$
LTC(X)=\sum_{\fQ'\in LT} m_{\fQ'}[\bar{T_{\fQ'}^*\fB}],
$$
with $LT:=\{\fQ' : \dim(\mu(\bar{T_{\fQ'}^*\fB}))=\dim(AV(X))\}$.  
\begin{Obs} \label{obs:av} If $AV(X)$ is known, then (\ref{eqn:av}) excludes $\fQ'$ from occuring in $CC(X)$ (with $m_{\fQ'}\neq 0$) when $\dim(\mu(\bar{T_{\fQ'}^*\fB}))>\dim(AV(X)).$
\end{Obs}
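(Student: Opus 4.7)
The plan is to derive this observation as an immediate consequence of formula (\ref{eqn:av}), which decomposes the associated variety as $AV(X)=\bigcup_{m_{\fQ'}\neq 0}\mu(\bar{T_{\fQ'}^*\fB})$. First I would note that, by definition of the union, every $\fQ'$ appearing with $m_{\fQ'}\neq 0$ in $CC(X)$ automatically satisfies $\mu(\bar{T_{\fQ'}^*\fB})\subseteq AV(X)$. Since each $\mu(\bar{T_{\fQ'}^*\fB})$ is a closed irreducible subvariety of $\cN\cap\fp$ (being the image of the irreducible variety $\bar{T_{\fQ'}^*\fB}$ under the proper moment map $\mu:T^*\fB\to\cN$), this inclusion forces the dimension bound $\dim\mu(\bar{T_{\fQ'}^*\fB})\le\dim AV(X)$.

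Taking the contrapositive yields the observation: if $\dim\mu(\bar{T_{\fQ'}^*\fB})>\dim AV(X)$, then necessarily $m_{\fQ'}=0$, so $\fQ'$ cannot contribute to $CC(X)$.

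There is no real obstacle beyond unpacking the definition of $AV(X)$ recorded in (\ref{eqn:av}); the content of the observation lies less in its proof than in its application. When $AV(X)$ is known independently --- for instance, from the fact recorded earlier that for a highest weight Harish-Chandra module $AV(X)$ equals one of the closures $\bar{\cO_j}\subset\fp_+$ appearing in the chain (\ref{eqn:inclusions}) --- the dimension inequality becomes a practical tool for pruning the list of candidate orbits $\fQ'$ that could possibly contribute to the sum (\ref{eqn:CC}), leaving only those whose moment map images have dimension at most $\dim\bar{\cO_j}$.
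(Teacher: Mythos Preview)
Your argument is correct and matches the paper's treatment: the observation is stated there without proof, as it follows immediately from the inclusion $\mu(\bar{T_{\fQ'}^*\fB})\subset AV(X)$ for every $\fQ'$ with $m_{\fQ'}\neq 0$, which is exactly what (\ref{eqn:av}) says. There is nothing to add.
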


Another way to exclude certain potential terms from ocuring in the characteristic cycle is by the $\tau$-invarant, a subset of the set of simple roots $\Pi$.  This is best expressed in terms of $\cM_\rho(\fg,B)$.  The $\tau$-invariant of $w\in W$ is defined as
$$
\tau(w):=\{\ga\in \Pi: w(\ga)<0\}.
$$
\begin{Prop}[\cite{BorhoBrylinski85}]  \label{prop:tau}
Write $CC(L_w)=\sum_{y\in W} m_{y,w}[\bar{T_{B_y}^*\fB}]$.  Then $m_{y,w}\neq 0$ only when $\tau(w)\subset\tau(y)$.
\end{Prop}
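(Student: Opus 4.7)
The plan is to exploit the geometric meaning of the $\tau$-invariant together with the functoriality of singular support under a smooth parabolic projection. Fix a simple root $\alpha\in\tau(w)$; the objective is to show that $\alpha\in\tau(y)$ whenever $m_{y,w}\neq 0$. Let $P_\alpha=B\cup Bs_\alpha B$ be the minimal standard parabolic associated to $\alpha$, and let $\pi_\alpha:\fB\to\fB_\alpha:=G/P_\alpha$ be the resulting smooth $\mathbf{P}^1$-bundle.

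The first ingredient is the standard dictionary
\[
\alpha\in\tau(y) \;\Longleftrightarrow\; ys_\alpha<y \;\Longleftrightarrow\; Z_y\text{ is $P_\alpha$-stable}.
\]
On the cotangent side, $P_\alpha$-stability of $Z_y$ is equivalent to $\bar{T_{B_y}^*\fB}$ being stable under the lifted $P_\alpha$-action on $T^*\fB$, and also to its being contained in the subbundle $\pi_\alpha^*T^*\fB_\alpha\subset T^*\fB$ of covectors annihilating the vertical tangent directions; indeed, the conormal to a $B$-orbit acquires the extra $s_\alpha$-direction of symmetry exactly when the orbit itself does.

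The second ingredient is the descent of $L_w$. Since $Z_w=\supp(L_w)$ is $P_\alpha$-stable, Beilinson-Bernstein localization realizes $L_w$ as a pullback $L_w\cong\pi_\alpha^\dagger M_w$ (up to shift by the fiber dimension) of an irreducible $\cD$-module $M_w$ on $\fB_\alpha$. Functoriality of singular support under the smooth map $\pi_\alpha$ then gives
\[
\supp\bigl(CC(L_w)\bigr)=\pi_\alpha^{-1}\!\bigl(\supp(CC(M_w))\bigr),
\]
which is automatically $P_\alpha$-stable. Hence every irreducible component $\bar{T_{B_y}^*\fB}$ contributing to $CC(L_w)$ with $m_{y,w}\neq 0$ is itself $P_\alpha$-stable, forcing $\alpha\in\tau(y)$ by the first ingredient. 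Since $\alpha\in\tau(w)$ was arbitrary, one obtains $\tau(w)\subset\tau(y)$ for each such $y$.

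The main technical input is the pullback identification $L_w\cong\pi_\alpha^\dagger M_w$. This is the familiar statement that the block of $\cM_\rho(\fg,B)$ consisting of modules with $\alpha$ in every $\tau$-invariant is equivalent, via $\pi_\alpha^\dagger$, to a corresponding category on $\fB_\alpha$; equivalently, $L_w$ is a $(\fg,P_\alpha)$-module precisely when $\alpha\in\tau(w)$. I expect this to be the only non-formal step; once the descent is in place, everything else is formal functoriality of singular support under a smooth morphism.
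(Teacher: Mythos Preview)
The paper does not prove this proposition; it simply quotes it from \cite{BorhoBrylinski85} and moves on. Your argument is in fact the standard one, and is essentially what underlies the Borho--Brylinski result: once $\alpha\in\tau(w)$, the $\cD$-module (equivalently, the $IC$ sheaf) localizing $L_w$ is the smooth pullback along $\pi_\alpha$ of an irreducible $\cD$-module on $G/P_\alpha$, so its singular support sits inside $\pi_\alpha^*T^*\fB_\alpha$; and a conormal closure $\bar{T_{B_y}^*\fB}$ lies in that subbundle precisely when $\alpha\in\tau(y)$.

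One terminological remark. With the paper's convention $\tau(y)=\{\alpha\in\Pi:y\alpha<0\}$, the condition $\alpha\in\tau(y)$ is equivalent to $ys_\alpha<y$, i.e.\ to $Z_y$ being \emph{saturated for the fibration} $\pi_\alpha:G/B\to G/P_\alpha$ (the right-side quotient). This is not the same as stability under the \emph{left} $P_\alpha$-action on $G/B$, which would instead correspond to $s_\alpha y<y$, i.e.\ $y^{-1}\alpha<0$. Your phrase ``stable under the lifted $P_\alpha$-action on $T^*\fB$'' is therefore potentially misleading; the correct and sufficient formulation is the one you also give, namely containment in the relative cotangent bundle $\pi_\alpha^*T^*\fB_\alpha$. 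With that clarification the argument is complete.
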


Another statement involving the $\tau$-invariant will be used.  This is stated in terms of the $\T_{\ga\gb}$-operators.  Let $\ga$ and $\gb$ be two simple roots of the same length so that $\IP{\ga}{\gb}\neq 0$.  One says that $w$ is in the domain of $\T_{\ga\gb}$ if $\ga\notin\tau(w)$ and $\gb\in\tau(w)$.  When $w$ is in the domain of $\T_{\ga\gb}$, then 
$$
\T_{\ga\gb}(w)=\begin{cases}  ws_\ga,    &\gb\notin \tau(ws_\ga),  \\
                             ws_\gb,    &\ga\in\tau(ws_\gb).
   \end{cases}
$$
(When $\ga$ and $\gb$ have unequal lengths the situation is slightly more complicated;  this case is omitted here since we do not need it.)
\begin{Prop}[\cite{McGovern00}]\label{prop:mcgovern}
Suppose $\ga,\gb\in \Pi$ have the same length and $\IP{\ga}{\gb}\neq 0$.  If $w,y$ are both in the domain of $\T_{\ga\gb}$, then $m_{y,w}=m_{\T_{\ga\gb}(y),\T_{\ga\gb}(w)}$.
\end{Prop}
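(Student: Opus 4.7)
My plan is to prove the identity by transporting both sides through the wall-crossing (coherent continuation) functors attached to the simple roots $\ga$ and $\gb$, and exploiting the fact that these functors act on characteristic cycles by an explicitly describable operator.

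First I would recall the wall-crossing functor $\psi_\ga$ on $\cM_\rho(\fg,B)$: it is the exact, self-adjoint endofunctor obtained by translating to the $\ga$-wall and then back. For an irreducible $L_v$, one has $\psi_\ga L_v=0$ iff $\ga\in\tau(v)$; and when $\ga\notin\tau(v)$, the composition factor $L_{vs_\ga}$ appears in $\psi_\ga L_v$ with multiplicity one, while every other constituent $L_{v'}$ has $\ga\in\tau(v')$. The analogous statements hold for $\gb$. I would treat $\psi_\ga$, $\psi_\gb$ and their composition $\psi_\gb\psi_\ga$ on both the module $L_w$ and (via the contravariance/adjointness that gives a dual statement on multiplicities) on the cycle side.

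Next I would pass to the geometric side via Riemann--Hilbert: each $\psi_\ga$ is pushforward--pullback through the $\mathbb P^1$-fibration $p_\ga\colon\fB\to\fB^\ga$, and therefore induces a $\Z$-linear operator $\Psi_\ga$ on the group generated by the irreducible conormal cycles $[\bar{T^*_{B_y}\fB}]$ with $CC(\psi_\ga M)=\Psi_\ga(CC(M))$. The operator $\Psi_\ga$ has the same qualitative behavior on cycles as $\psi_\ga$ does on modules: $[\bar{T^*_{B_y}\fB}]$ with $\ga\in\tau(y)$ is killed (up to cycles supported on smaller strata), and otherwise $\Psi_\ga[\bar{T^*_{B_y}\fB}]=[\bar{T^*_{B_{ys_\ga}}\fB}]+(\text{terms with }\ga\in\tau)$. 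Combining this with Step~1 for the module side gives two expressions for the cycle $CC(\psi_\gb\psi_\ga L_w)$, whose coefficients can be compared.

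The decisive step is to isolate the $T_{\ga\gb}$-contribution inside these expansions. Under the hypothesis that $\ga,\gb$ have equal length and $\IP{\ga}{\gb}\neq 0$, the rank-two parabolic generated by $s_\ga,s_\gb$ is of type $A_2$; the two cases in the definition of $T_{\ga\gb}(w)$ correspond exactly to the two ways the element $w$ can descend from the long element of this $A_2$, and they are distinguished by the $\tau$-invariant. By restricting to the subspace of cycles indexed by elements $y$ in the domain of $T_{\ga\gb}$ (for which $\ga\notin\tau(y)$, $\gb\in\tau(y)$), all the ``error'' constituents $L_{v'}$ and $[\bar{T^*_{B_{v'}}\fB}]$ with $\ga\in\tau$ drop out of both sides after projecting onto the $T_{\ga\gb}(y)$-component. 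What remains is the clean identity $m_{y,w}=m_{T_{\ga\gb}(y),T_{\ga\gb}(w)}$. The main obstacle I anticipate is precisely this bookkeeping: verifying that the ``other'' constituents do not contribute to the coefficient of $[\bar{T^*_{B_{T_{\ga\gb}(y)}}\fB}]$ on either side, which requires the $\tau$-invariant bound of Proposition~\ref{prop:tau} together with the $A_2$-analysis sketched above.
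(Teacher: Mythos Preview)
The paper does not prove Proposition~\ref{prop:mcgovern}; it is simply quoted from \cite{McGovern00} and used as a black box. So there is no proof in the paper to compare your proposal against.

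That said, your strategy is essentially the one McGovern uses: the $\T_{\ga\gb}$-operators are defined via wall-crossing (coherent continuation) functors, and the point is that these functors are compatible with passage to characteristic cycles, so that the module-theoretic identity $L_{\T_{\ga\gb}(w)}$ ``appearing'' in a suitable composition of wall-crossings applied to $L_w$ transfers to the same identity on the cycle level. Your outline of $\psi_\ga$, $\psi_\gb$ and their induced operators $\Psi_\ga$, $\Psi_\gb$ on conormal cycles is the right framework. The one place where your sketch is too loose is the description of $\Psi_\ga$: it is not literally true that $\Psi_\ga$ annihilates $[\bar{T^*_{B_y}\fB}]$ when $\ga\in\tau(y)$ ``up to cycles on smaller strata''; the correct statement is that $\Psi_\ga$ is the push--pull through the $\mathbb{P}^1$-fibration, and one must track how this interacts with the $\tau$-filtration more carefully. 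The bookkeeping you flag as the main obstacle---showing that the error terms with $\ga\in\tau$ or $\gb\notin\tau$ cannot contribute to the coefficient of $[\bar{T^*_{B_{\T_{\ga\gb}(y)}}\fB}]$---is exactly the content of the argument, and it does require the $A_2$ case analysis together with Proposition~\ref{prop:tau}. Your proposal identifies the right ingredients but does not yet carry out this step; for a complete proof you should consult \cite{McGovern00} (or the closely related arguments of Joseph and Tanisaki on which it builds).
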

\noindent This will give us good information about several $CC(L_{\T_{\ga\gb}(w)})$ once we know $CC(L_w)$.

The notion of Harish-Chandra cell naturally arises.  The (finite) set of irreducible Harish-Chandra modules of infinitesimal character $\rho$ is partitioned into Harish-Chandra cells by an equivalence relation $\sim$.  We say that $X\sim Y$ when $X\prec Y$ and $Y\prec X$, for the partial order $\prec$ generated by $X\prec Y$ when $X$ occurs as a subquotient in $Y\otimes F$, for some finite dimensional representation $F$ that is isomorphic to a subrepresentation of the tensor algebra of $\fg$.  It easily follows that two representations in a Harish-Chandra cell have the same associated variety.

If $\eC$ is a Harish-Chandra cell, then $V_\eC:=\spn\{X:X\in \eC\}$ may be identified with a subquotient of the coherent continuation representation of $W$ acting on the Grothendieck group of $\cM_\rho(\fg,K)$.  This `cell representation' need not be irreducible, but it always contains a `special' representation.  This constituent may be described as follows.  The associated variety of the annihilator of some $X\in\eC$ is the closure of a single nilpotent $G$-orbit $\cO^\C\subset \cN$ (\cite{Vogan91}).  It is a fact that $AV(\ann(X))$ is the same orbit closure for all $X\in\eC$  and if $\bar{\cO}$ is an irreducible component of $AV(X)$, then $G\cdot\cO=\cO^\C$.   Then $V_\eC$ contains $\pi(\cO^\C,1)$, the irreducible representation of $W$ corresponding to $\cO^\C$ under the Springer correspondence; this representation is special in the sense of Lusztig.  There are tables giving the Springer correspondence and indicating which representations are special (e.g., \cite{Carter85}).

\section{Classical simply laced cases}\label{sec:sl}
Let $G_\R$ be one of the simple classical groups of hermitian type for which $\fg$ is simply laced.  These are
\begin{equation}\label{eqn:sl-classical}
SU(p,q), SO_e(2,2n-2)\text{ and } SO^*(2n).
\end{equation}
Consider the Schubert varieties in the generalized flag variety $\cF:=G/KP_+$, that is the closures of the $B$-orbits in $\cF$.  As mentioned in the introduction, it is shown in \cite{BoeFu97} that the characteristic cycles of the $IC$ sheaves for these Schubert varietes are irreducible.  We may use this to show that all characteristic cycles of highest weight Harish-Chandra modules for the groups in (\ref{eqn:sl-classical}) are irreducible.  

To accomplish this we need several facts.  Let $\pi:\fB\to\cF$ be the natural fiber bundle.
\begin{Lem}
Let $w\in W$.  Then $w\in\cW$ if and only if $w^{-1}$ is maximal in its $W_c$-coset. \end{Lem}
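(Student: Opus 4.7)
The plan is to reduce both sides of the asserted equivalence to the same combinatorial condition on how $w^{-1}$ acts on the simple compact roots $\Pi_c \subset \gD_c^+$.

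First I would unpack the defining condition of $\cW$. Testing $\gD_c^+$-dominance of $-w\rho$ against a simple compact coroot $\ga^\vee$ (with $\ga \in \Pi_c$), and using that $\IP{\rho}{\gb^\vee}$ has the same sign as $\gb$ for every root $\gb$, one gets
\[
\IP{-w\rho}{\ga^\vee} \ge 0 \;\Longleftrightarrow\; \IP{\rho}{w^{-1}\ga^\vee} \le 0 \;\Longleftrightarrow\; w^{-1}(\ga) < 0.
\]
Thus $w \in \cW$ if and only if $w^{-1}(\ga) < 0$ for every simple compact root $\ga$.

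Next I would invoke the standard Coxeter-length identity: for $x \in W$ and a simple root $\ga$ of $\fg$, $\ell(xs_\ga) < \ell(x)$ iff $x(\ga) < 0$. Applying this with $x = w^{-1}$ and $\ga$ ranging over $\Pi_c$, the condition just obtained becomes $\ell(w^{-1}s_\ga) < \ell(w^{-1})$ for every $\ga \in \Pi_c$. Since $\Pi_c$ generates the reflection subgroup $W_c$, this is precisely the standard descent characterization of the unique maximum-length element of the right coset $w^{-1}W_c$: an element is longest in its coset iff right-multiplication by every generator of $W_c$ strictly decreases length.

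The only fine point is keeping the side of the coset straight: the action of $w^{-1}$ (not $w$) on $\Pi_c$ controls dominance of $-w\rho$, so the coset that appears naturally is the right coset $w^{-1}W_c$ rather than the left coset $W_cw^{-1}$. Beyond that, the proof is a direct unpacking of definitions combined with a standard fact about parabolic subgroups of Coxeter groups, so I anticipate no serious obstacle.
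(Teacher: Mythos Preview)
Your proof is correct and follows essentially the same route as the paper: both reduce the condition $w\in\cW$ to $w^{-1}(\ga)<0$ for all $\ga\in\Pi_c$ (equivalently $w^{-1}\gD_c^+\subset-\gD^+$), and then identify this with $w^{-1}$ being longest in its right $W_c$-coset. The only difference is that you invoke the standard parabolic-coset descent criterion for Coxeter groups as a known fact, whereas the paper proves that equivalence directly by counting inversions and using that $W_c$ preserves $\gD^+\smallsetminus\gD_c^+$ (the hermitian structure); your citation is legitimate here since $\Pi_c\subset\Pi$ makes $W_c$ a standard parabolic subgroup.
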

\begin{proof} We first show that $w$ is maximal in its $W_c$-coset if and only if $w\gD_c^+\subset -\gD^+$.   If $w$ is maximal in its coset, then $\ell(ws_\ga)\leq\ell(w)$ for all simple $\ga\in\gD_c^+$.  This implies $w(\gb)<0$, for all $\gb\in\gD_c^+$.  Conversely, if $w\gD_c^+\subset-\gD^+$, then $\ell(w)=\#A + \#\gD_c^+$, , for 
$$
A:=\{\gb\in\gD^+\smallsetminus\gD_c^+: w(\gb)<0\}.
$$
  Now let $y\in W_c$.  Since $K$ normalizes $\fp_+$, $y(\gD^+\smallsetminus\gD_c^+)=\gD^+\smallsetminus\gD_c^+$, so $y^{-1}A=\{\gg\in \gD^+\smallsetminus\gD_c^+ : w(\gg)<0\}$.  Therefore, $\ell(wy)\leq \#(y^{-1}A)+\#\gD_c^+ = \#A + \#\gD_c^+ = \ell(w).$  So $w$ is maximal in its coset.

For $\ga\in\gD_c^{+}$,  $w\ga<0$ if and only if $(-\rho,w\ga)>0$.  Therefore, $w$ is of maximal length in its coset if and only if  $-w^{-1}\rho$ is $\gD_c^+$-dominant.  The lemma follows.
\end{proof}

We make use of several general facts.
\begin{enumerate}
\item If $y\in W$ is maximal in its $W_c$ coset, then $\pi|_{Z_y}:Z_y\to \pi(Z_y)$ is a fibration with smooth fiber.  The  $IC$ sheaf for $Z_y$ has  irreducible characteristic cycle if and only if the same holds for the $IC$ sheaf for $\pi(Z_y)$.
\item The characteristic cycle of the $IC$ sheaf for $Z_y$ is $CC(L_y)$, for any $y\in W$ (\cite{Kashiwara85}).
\item For any $w\in W$, $CC(L_w)$ is irreducible if and only if $CC(L_{w^{-1}})$ is irreducible (\cite[\S6.9]{BorhoBrylinski85}).
\end{enumerate}

\begin{Thm}
If $G_\R$ is one of the groups in (\ref{eqn:sl-classical}), then $CC(L_w)=[\bar{T_{B_w}^*\fB}]$, for each $w\in\cW$.
\end{Thm}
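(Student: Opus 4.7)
The plan is to assemble the three facts itemized just before the statement into a short chain of implications, with the geometric input coming from Boe--Fu. Fix $w\in\cW$. By the lemma proved immediately above, the condition $w\in\cW$ is equivalent to $y:=w^{-1}$ being maximal in its $W_c$-coset. This is the hypothesis needed to activate fact (1): the restriction $\pi|_{Z_y}\colon Z_y\to \pi(Z_y)$ is a fibration with smooth fiber (the fiber being $KP_+/B$ or a subvariety thereof, smooth because the fibration is pulled back from the map $G/B\to G/KP_+$ over a smooth base).

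Next I would invoke Boe and Fu. Under the running hypothesis, $\cF=G/KP_+$ is a generalized grassmannian of classical type (a Grassmannian for $SU(p,q)$, an isotropic Grassmannian for $SO_e(2,2n-2)$, and a spinor variety for $SO^*(2n)$), so \cite{BoeFu97} gives that the $IC$ sheaf of every Schubert variety in $\cF$ has irreducible characteristic cycle. Applying fact (1) in the converse direction, the $IC$ sheaf of $Z_y=\bar{B_y}$ also has irreducible characteristic cycle. Fact (2), which identifies this with $CC(L_y)$ via the Riemann--Hilbert correspondence, then shows that $CC(L_{w^{-1}})$ consists of a single term.

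Finally I would pass from $w^{-1}$ back to $w$ by fact (3): irreducibility of the characteristic cycle is preserved under $y\mapsto y^{-1}$, so $CC(L_w)$ consists of a single term. By Theorem~\ref{thm:cc}(2), that term must be $[\bar{T^*_{\fQ'}\fB}]$ for some $\fQ'\subset\bar{B_w}$, and by Theorem~\ref{thm:cc}(1) the multiplicity on the component $\bar{T^*_{B_w}\fB}$ supported over $\bar{B_w}$ is $1$. Hence the unique term must in fact be $[\bar{T^*_{B_w}\fB}]$, giving the theorem.

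There is no real obstacle: all substantive geometry is outsourced to \cite{BoeFu97}, and the remaining work is purely formal bookkeeping. The only point requiring a little care is matching conventions, namely that $w\in\cW$ corresponds under inversion to the element of $W$ that is maximal in its $W_c$-coset (so that the Schubert variety $Z_{w^{-1}}$, and not $Z_w$, is the one that maps as a smooth fibration to a Schubert variety in $\cF$); this is precisely the content of the lemma above, which is what makes the whole argument go through.
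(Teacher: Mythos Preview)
Your proposal is correct and follows essentially the same approach as the paper: invoke the lemma to see that $w^{-1}$ is maximal in its $W_c$-coset, apply Boe--Fu to $\pi(Z_{w^{-1}})$, and then use facts (1), (2), (3) in turn. Your final paragraph, spelling out via Theorem~\ref{thm:cc} why the single term must be $[\bar{T^*_{B_w}\fB}]$, makes explicit a step the paper leaves implicit.
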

\begin{proof}  Let $w\in\cW$. It follows from \cite{BoeFu97} that the characteristic cycle of the $IC$ sheaf for $\pi(Z_{w^{-1}})$ is irreducible.  By points (1) and (2), $CC(L_{w^{-1}})$ is irreducible.  Now, by (3), $CC(L_w)$ is irreducible.
\end{proof}

\section{$E_6$ and $E_7$}\label{sec:exceptional}

As in the classical simply laced cases, the characteristic cycles of highest weight Harish-Chandra modules are irreducible in the two exceptional cases.  This is proved by using Proposition \ref{prop:mcgovern} that  relates the $\T_{\ga\gb}$-operators to the characteristic cycle.

\begin{Thm}\label{thm:cc-exeptional}  If $G_\R$ is $E_{6(-14)}$ or $E_{7(-25)}$, then $CC(L_w)=[\bar{T_{B_w}^*\fB}]$, for all $w\in\cW$.
\end{Thm}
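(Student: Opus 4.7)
The plan is to reduce the computation of $CC(L_w)$ for $w\in\cW$ to a small set of base cases by propagating irreducibility along the $\T_{\ga\gb}$-operators of Proposition \ref{prop:mcgovern}.  Since $E_6$ and $E_7$ are simply laced, $\T_{\ga\gb}$ is defined for every pair of adjacent simple roots.  The set $\cW$ is small --- $|\cW|=27$ for $E_{6(-14)}$ and $|\cW|=56$ for $E_{7(-25)}$, equal to the number of Schubert cells in the cominuscule flag variety $G/KP_+$ --- so a finite case analysis is feasible.

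First I would enumerate $\cW$, compute the Bruhat order on it, and for each $w\in\cW$ record $\tau(w)\subset\Pi$ together with the associated variety $AV(L_w)$, which by (\ref{eqn:inclusions}) is one of $\bar{\cO_0},\dots,\bar{\cO_r}$.  Fixing $w\in\cW$, the goal is to show $m_{y,w}=0$ for every $y\in W$ with $y\neq w$ and $B_y\subset\bar{B_w}$.  Many such pairs are excluded immediately: by Proposition \ref{prop:tau} whenever $\tau(w)\not\subset\tau(y)$, and by Observation \ref{obs:av} whenever $\dim\mu(\bar{T_{B_y}^*\fB})>\dim AV(L_w)$.

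For each remaining pair $(y,w)$, I would look for adjacent simple roots $\ga,\gb$ such that both $y$ and $w$ lie in the domain of $\T_{\ga\gb}$; Proposition \ref{prop:mcgovern} then gives $m_{y,w}=m_{\T_{\ga\gb}(y),\T_{\ga\gb}(w)}$, and the hope is that iterating this transport lands on a pair already excluded.  It is convenient to organize the induction by Harish-Chandra cells inside $\cW$: the $\T_{\ga\gb}$-operators respect cells, and the special representation attached to the orbit closure $AV(\ann(L_w))$ sits inside each cell representation, giving a rigid combinatorial skeleton against which to verify progress.

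The main obstacle will be verifying that this cascade actually connects every non-diagonal pair $(y,w)$ to an excluded one.  I anticipate a small residue of hard pairs, typically with $w$ near the top of the Bruhat order on $\cW$, where $AV(L_w)=\fp_+$ and $\tau(w)$ is large, so that neither Proposition \ref{prop:tau} nor Observation \ref{obs:av} imposes any restriction.  For these I would fall back on point (3) of Section \ref{sec:sl} --- irreducibility of $CC(L_w)$ is equivalent to that of $CC(L_{w^{-1}})$ --- to swap into a more tractable element when $w^{-1}$ makes the $\T_{\ga\gb}$-cascade reach an exclusion; failing that, one would compute the singularity of $\bar{B_w}$ at a generic point of $B_y$ directly.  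Because $|\cW|\leq 56$, this residual analysis is manageable by hand or with light computer assistance.
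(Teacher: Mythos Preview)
Your plan is essentially the paper's own argument: enumerate $\cW$, use the $\tau$-invariant (Proposition~\ref{prop:tau}) and associated-variety bound (Observation~\ref{obs:av}) to prune the possible terms in each $CC(L_w)$, and then eliminate the remaining off-diagonal multiplicities by cascading Proposition~\ref{prop:mcgovern} along adjacent simple roots. The paper organizes the cascade by conormal bundle rather than by pair --- it shows successively that each $\bar{T_{B_{w_j}}^*\fB}$ appears only in $CC(L_{w_j})$, starting from indices $j$ already clean after pruning (e.g.\ $j=4$ for $E_6$) and propagating via suitable $\T_{\ga\gb}$ --- but this is the same computation. Your anticipated fallbacks (the $w\leftrightarrow w^{-1}$ trick or a direct slice calculation) are not needed: the pruning plus $\T_{\ga\gb}$-transport already reaches every pair, and your intuition about the hard cases is inverted --- elements near the \emph{top} of the Bruhat order on $\cW$ have \emph{small} associated variety (down to $\bar{\cO_0}$ for the open orbit), so Observation~\ref{obs:av} is most restrictive there, while the residual pairs to chase occur in the middle of $\cW$.
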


First consider the $E_6$ case.  Then $K=\C^\times \times Spin(10,\C)$ and the number of highest weight Harish-Chandra modules is $27$.  The simple roots are given the usual numbering and $\ga_1$ is the noncompact simple root.  The elements of $\cW$ are listed as $w_1,\dots,w_{27}$ in Table \ref{tab:w-e6} in the appendix.  

In Table 1 we list the corresponding $\tau$-invariants (by giving sets of indices of simple roots), the dimension of the support, the associated variety and what we call the `possible' characteristic cycles (expressed by giving a set of indices $k$ for which $\bar{T_{B_{w_k}}^*\fB}$ could occur in $CC(L_{w_i})$).  By `possible' we mean those conormal bundle closures that cannot be excluded from the characteristic cycle by considering columns 2-4 and Theorem \ref{thm:cc}, Observation \ref{obs:av} (along with (\ref{eqn:inclusions})) and Proposition \ref{prop:tau}.

\begin{table}[H]\label{table:e6-data}
\scalebox{.8}{\mbox{\hspace{0cm}
$
\setlength{\extrarowheight}{3pt}
\begin{array}{rlccc|rlccc}
i & \quad \tau  (w_i)       &  \dim(Z_{w_i})  & AV(L_{w_i}) & \text{Pos. CC} &
i\;\, & \quad \tau  (w_i)       &  \dim(Z_{w_i})  & AV(L_{w_i}) & \text{Pos. CC} \\
\hline  
1     & \{2, 3, 4, 5, 6 \}  &     20   & \cO_2                 & \{1\}   &
15     & \{1, 2, 4, 5, 6 \}  &     28   & \cO_2            & \{2, 15\}   \\
 2     & \{1, 2, 4, 5, 6 \}  &     21   & \cO_2                & \{2\}   &
16     & \{1, 2, 3, 4, 6 \}  &     29   & \cO_2            & \{5, 16\}   \\
 3     & \{1, 2, 3, 5, 6 \}  &     22   & \cO_2                & \{3\}   &
17        & \{2, 3, 5, 6 \}  &     29   & \cO_2          & \{1, 3, 10, 17\}   \\
 4        & \{1, 3, 4, 6 \}  &     23   & \cO_2                 & \{4\}   &
18     & \{1, 2, 3, 5, 6 \}  &     30   & \cO_2        & \{3, 10, 18\}   \\
 5     & \{1, 2, 3, 4, 6 \}  &     24   & \cO_2               & \{5\}   &
19        & \{3, 4, 5, 6 \}  &     30   & \cO_2         & \{1, 8, 19\}   \\
 6        & \{1, 3, 4, 5 \}  &     24   & \cO_2                & \{6\}   &
 20        & \{1, 4, 5, 6 \}  &     31   & \cO_2         & \{2, 8, 15, 20\}  \\
 7        & \{1, 2, 3, 5 \}  &     25   & \cO_2                & \{3, 7\}   &
21     & \{2, 3, 4, 5, 6 \}  &     31   & \cO_1            & \{1, 21\}   \\
 8     & \{1, 3, 4, 5, 6 \}  &     25   & \cO_2                & \{8\}   &
22     & \{1, 2, 4, 5, 6 \}  &     32   & \cO_1               & \{22\}  \\
 9        & \{1, 2, 4, 5 \}  &     26   & \cO_2                & \{2, 9\}   &
23     & \{1, 3, 4, 5, 6 \}  &     32   & \cO_1               & \{23\}    \\
10     & \{1, 2, 3, 5, 6 \}  &     26   & \cO_2              & \{3, 10\}  &
24     & \{1, 2, 3, 5, 6 \}  &     33   & \cO_1               & \{24\}      \\
11        & \{2, 3, 4, 5 \}  &     27   & \cO_2              & \{1, 11\}  &
25     & \{1, 2, 3, 4, 6 \}  &     34   & \cO_1               & \{25\}   \\
12        & \{1, 2, 4, 6 \}  &     27   & \cO_2               & \{2, 5, 12\}   &
26     & \{1, 2, 3, 4, 5 \}  &     35   & \cO_1               & \{26\}   \\
13     & \{1, 2, 3, 4, 5 \}  &     28   & \cO_2               & \{13\}   &
27  & \{1, 2, 3, 4, 5, 6 \}  &     36  & \cO_0              & \{27\} \\
14        & \{2, 3, 4, 6 \}  &     28   & \cO_2             & \{1, 5, 14\}  &&&&& \\
\end{array}
$
}  }
\medskip
\caption{$E_6$ Data}
\end{table}


The Table 2 gives the $\T_{\ga\gb}$ operators.  Using the usual numbering of simple roots, the pairs of consecutive simple roots are $(\ga_1,\ga_3)$, $(\ga_2,\ga_4)$, $(\ga_3,\ga_4)$, $(\ga_4,\ga_5)$, and $(\ga_5,\ga_6)$.  For each pair, the table lists those $w\in\cW$ that are in the domain of $\T_{\ga_i\ga_j}$ (written as $\T_{ij}$) and $\T_{\ga_i\ga_j}(w)$.  An entry of the table is a natural number $i$ which is short for $w_i$.

\begin{table}[H]
\scalebox{.9}{\mbox{\hspace{0cm}
$
\setlength{\extrarowheight}{3pt}
\begin{array}{cc|cc|cc|cc|cc}
w & \T_{13}(w)       &  w & \T_{34}(w)       &w & \T_{24}(w)       &w & \T_{45}(w)       &w & \T_{56}(w)              \\
\hline  
1     &  2  & 2  &  3   &  4  &  3  & 3   &  4 &  4  & 6       \\
11    &  9  & 9  &  7   &  6  &  7  & 7   &  5 &  5  & 7       \\
14    & 12  & 12 &  10  &  8  & 10  & 10  & 12 & 12  & 9       \\
17    & 15  & 15 &  17  & 19  & 17  & 17  & 14 & 14  & 11      \\
19    & 20  & 20 &  18  & 20  & 18  & 18  & 16 & 16  &  13      \\
21    & 22  & 22 &  24  & 23  & 24  & 24  & 25 & 25  &  26      \\
\end{array}
$
}  }
\medskip
\caption{$\T_{\ga\gb}$'s for $E_6$}
\end{table}

Now we may apply Proposition \ref{prop:mcgovern} to see that all characteristic cycles are irreducible.   For this first note that, by the last column of Table 1, $\Tbar_4:=\bar{T_{B_{w_4}}^*\fB}$ does not occur in any $CC(L_w)$ (other than $CC(L_{w_4})$).  By considering $\T_{24}$, $\Tbar_3$ does not occur in characteristic cycles for $w_7, w_{10}, w_{17}$ and $w_{18}$.  This is because, for example, $w_4, w_6$ are in the domain of $\T_{24}$, so $m_{w_3,w_7}= m_{\T_{24}(w_4),\T_{24}(w_6)}=m_{w_4,w_6}=0$.  By the last column of Table 1, this means that $\Tbar_3$ does not occur  anywhere (other than in $CC(L_{w_3})$).  Now this fact along with the same reasoning applied to $\T_{34}$ tells us that $\Tbar_2$ does not occur anywhere.  Similarly, from $\T_{12}$, we see that $\Tbar_1$ only occurs in $CC(L_{w_1})$.  The last column of Table 1 shows that $\Tbar_7$ only occurs in $CC(L_{w_7})$,  Therefore $\Tbar_5$ occurs only in $CC(L_{w_5})$ (by looking at $\T_{45}$).  One excludes $\Tbar_{10}, \Tbar_8$ and $\Tbar_{15}$ similarly by considering $\T_{45}, \T_{24}$ and $\T_{13}$ (respectively).

For $E_7$, $K=\C^\times\times E_6$ and $\#\cW=56$.  The  elements of $\cW$ are given in  Table 4 in  Appendix A.  The necessary information for $E_7$ is listed in remaining tables in Appendix \ref{appendix:A}.  These tables are easily generated by Maple.  It is easy to check that the same argument for irreducibility  of characteristic cycles as given in the previous paragraph also applies to the $E_7$ case.

\section{$SO_e(2,2n-1)$}\label{sec:so}    We take the simple roots to be 
$$\Pi:=\{\ga_1=\eps_1-\eps_2,  \ga_2=\eps_1-\eps_3,\dots , \ga_{n-1}=\eps_{n-1}-\eps_n,\ga_n=\eps_n\},
$$
with $\ga_1$ noncompact.  Therefore $\gD_c=\{\pm(\eps_i-\eps_j):2\leq i\neq j\leq n\}\cup\{\pm\eps_i:2\leq i\leq n\}$ and $K=\C^\times \times SO(2n-1,\C)$.  The number of highest weight Harish-Chandra modules is $\#\left(W/W_c\right)=2n$.

Since the action of $K$ on $\fp_+$ is by scalars and by the usual action of $SO(2n-1,\C)$ on $\fp_+=\C^{2n-1}$,  the orbits are 
\begin{equation*}
\cO_0=\{0\}, \, \cO_1=\{X\in\fp_+:\IP{X}{X}=0, X\neq 0\}\, \text{ and }
\cO_2=\{X\in\fp_+:\IP{X}{X}\neq 0\},
\end{equation*}
for some $K$-invariant symmetric form $\IP{\,}{\,}$ on $\fp_+$. The partition associated to the complex orbit $\cO_2^\C$ (resp.,  $\cO_1^\C$)  is $[3,1^{2n-2}]$ (resp., $[2^2,1^{2n-2}]$).  Thus, \cite[\S6]{CollingwoodMcgovern93}  easily implies $\cO_2^\C$ is special, but $\cO_1^\C$ is not special.  Alternatively, one may conclude the same by computing the Springer correspondence, as given in \cite{Carter85}.  It is shown in \cite{BarbaschVogan82} that annihilators of irreducible Harish-Chandra modules have associated varieties that are closures of special orbits.  Therefore, associated varieties of the highest weight Harish-Chandra modules are $\bar{\cO_2}$, except for the trivial representation, which has associated variety $\bar{\cO_0}$.  

The highest weight Harish-Chandra modules are $\{L_w,w\in\cW\}$, with $\cW$ described as follows.  For $k=1,2,\dots,n$, let $\tilde{w}_k$ be the permutation $(1,2,3,\dots ,n-k+1)$, written in cycle notation.  Let $w_0$ be the long element of $W$ ($w_0=-Id$ on $\fh^*$) and let $\gs_{\eps_1}$ be the reflection in $\eps_1$.  Set 
\begin{equation*}
w_k^+=w_0\tilde{w}_k\text{ and }\, w_k^-=w_0\gs_{\eps_1}\tilde{w}_k.
\end{equation*}
Then $\cW=\{w_k^+,w_k^-:k=1,\dots,n\}$ and 
\begin{equation*}
-w_k^\pm\rho=(\pm(k-\frac12), n-\frac12, n-\frac32,\dots, \widehat{k-\frac12},\dots,\frac32,\frac12).
\end{equation*}
One easily checks the following.
\begin{Lem}\label{lem:B1}
For $k=1,2,\dots,n$ the following hold.
\begin{enumerate}[label=\upshape{(\alph*)}]
\item $\ell(w_k^+)=(n-1)^2+n+k-1$ and $\ell(w_k^-)=(n-1)^2+n-k$.
\item $\tau(w_k^+)=\tau(w_{k+1}^-)=\Pi\smallsetminus\{\ga_{n-k}\}, k=1,2,\dots,n-1$,\\
      $\tau(w_n^+)=\Pi$ and $\tau(w_1^-)=\Pi\smallsetminus\{\ga_n\}$.
\item $\gD(\fn\cap\fn^{w_k^+})=\{\eps_1-\eps_j:j=2,3,\dots,n-k+1\}, k=1,2,\dots n-1$, \\
      $\gD(\fn\cap\fn^{w_n^+})=\emptyset$, \\
      $\gD(\fn\cap\fn^{w_k^-})=\{\eps_1\}\cup \{\eps_1-\eps_j:j=2,3,\dots,n\}\cup \{\eps_1+\eps_j:j=n-k+2,\dots,n\},\\ \hbox{}\qquad k=2,\dots, n.$ \\
      $\gD(\fn\cap\fn^{w_1^-})=\{\eps_1\}\cup \{\eps_1-\eps_j:j=2,3,\dots,n\}.$ 
           
\item $\mu\left(\bar{ T_{B_{w_n^+}}^*\fB }\right)=\bar{\cO_0}=\{0\},  $  \\
   $\mu\left(\bar{ T_{B_{w_k^+}}^*\fB }\right)=\bar{\cO_1}, k=1,2,\dots, n-1,$  \\
   $\mu\left(\bar{ T_{B_{w_k^-}}^*\fB }\right)=\bar{\cO_2}, k=1,2,\dots, n$.
\end{enumerate}
\end{Lem}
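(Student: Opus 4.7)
The lemma reduces to four elementary verifications based on the description of $\tilde{w}_k$ and $\gs_{\eps_1}\tilde{w}_k$ as (signed) permutations of $\fh^*$. The plan is to handle the parts in order; (a)--(c) follow from the same explicit formulas, and (d) is then a short consequence of (c) together with Lemma~2.2.

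For (a), I would use $\ell(w_0 u) = n^2 - \ell(u)$. The cycle $\tilde{w}_k = (1,2,\ldots,n-k+1)$ lies in $S_n \subset W(B_n)$ with no sign changes and exactly $n-k$ inversions, so $\ell(\tilde{w}_k)=n-k$. For $u=\gs_{\eps_1}\tilde{w}_k$, the signed permutation acts as $\eps_i\mapsto\eps_{i+1}$ for $1\le i\le n-k$, $\eps_{n-k+1}\mapsto-\eps_1$, and fixes $\eps_i$ for $i>n-k+1$; a routine case split over the positive $B_n$-roots $\eps_i-\eps_j$, $\eps_i+\eps_j$, $\eps_i$ yields $\ell(u)=n+k-1$. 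Subtracting from $n^2$ gives the two length formulas.

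For (b), since $w_0=-\operatorname{Id}$, a simple root $\ga$ lies in $\tau(w_k^\pm)$ iff $u(\ga)>0$, with $u\in\{\tilde{w}_k,\,\gs_{\eps_1}\tilde{w}_k\}$. Evaluating $u(\ga_i)$ directly on each simple root, the only simple root sent to a negative root by $\tilde{w}_k$ is $\ga_{n-k}$ (image $\eps_{n-k+1}-\eps_1$); for $\gs_{\eps_1}\tilde{w}_k$ it is $\ga_{n-k+1}$ (or $\ga_n$ when $k=1$). The listed $\tau$-invariants follow, with the boundary cases $k=1,n$ handled separately.

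For (c), use $\gD(\fn\cap\fn^w)=\{\gb\in\gD^+:w^{-1}\gb>0\}$. Since $w_0$ is central with $w_0=-\operatorname{Id}$, one has $(w_k^+)^{-1}\gb=-\tilde{w}_k^{-1}\gb$ and $(w_k^-)^{-1}\gb=-\tilde{w}_k^{-1}\gs_{\eps_1}\gb$, so the condition becomes $\tilde{w}_k^{-1}\gb<0$ (resp.\ $\tilde{w}_k^{-1}\gs_{\eps_1}\gb<0$). Plugging in the explicit action of $\tilde{w}_k^{-1}$ ($\eps_1\mapsto\eps_{n-k+1}$, $\eps_j\mapsto\eps_{j-1}$ for $2\le j\le n-k+1$, $\eps_j\mapsto\eps_j$ for $j>n-k+1$) and checking signs on each positive root yields exactly the listed sets.

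For (d), by Lemma~2.2 the variety $B\cdot(\fn\cap\fn^{w_k^\pm})=\mu(\bar{T^*_{B_{w_k^\pm}}\fB})$ is the closure of a single $K$-orbit, hence one of $\bar\cO_0,\bar\cO_1,\bar\cO_2$ by (\ref{eqn:inclusions}); it suffices to identify which by evaluating $\IP{\cdot}{\cdot}$ on a generic element. The $SO(2n-1,\C)$-weights on $\fp_+$ are the $\pm\eps_j$ (carried respectively by $X_{\eps_1\pm\eps_j}$, $2\le j\le n$) and $0$ (carried by $X_{\eps_1}$), so by weight considerations the form pairs $X_{\eps_1-\eps_j}$ nontrivially only with $X_{\eps_1+\eps_j}$ and pairs $X_{\eps_1}$ with itself. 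From (c): for $w_k^+$ with $k<n$ the root spaces are all of the form $\eps_1-\eps_j$, so $\IP{X}{X}=0$ generically and the orbit is $\bar\cO_1$; for $w_n^+$ the intersection is $\{0\}$, giving $\bar\cO_0$; and for $w_k^-$ the presence of $X_{\eps_1}$ ensures $\IP{X}{X}\neq0$ generically, so the orbit is $\bar\cO_2$. The entire argument is bookkeeping; the only real pitfall is tracking the edge cases $k=1$ and $k=n$ in (b)--(d), where the cycle degenerates and the affected root shifts.
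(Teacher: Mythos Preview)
Your proposal is correct and follows the same approach as the paper, which treats the lemma as a routine verification (``one easily checks the following'') and only adds the hint that (d) follows from (c) by checking when $\fn\cap\fn^{w}$ lies in the null cone $\{X\in\fp_+:\IP{X}{X}=0\}$. Your sketch simply fills in the details the paper omits: the length computation via $\ell(w_0u)=n^2-\ell(u)$, the $\tau$-invariant via $w_0=-\operatorname{Id}$, the explicit root-by-root check for (c), and the weight-pairing argument for (d) are exactly what is needed and match the paper's intended line.
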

The last statement may be seen from part (c) and noticing when $\fn\cap\fn^{w}$ lies in $\{X\in\fp_+:\IP{X}{X}=0\}$.

By considering the $\tau$-invariant, the closure relations of the Schubert varieties and the moment map images (and applying Theorem \ref{thm:cc}, Observation \ref{obs:av} and Proposition \ref{prop:tau}) we see that 
\begin{align*}
  CC(L_{w_k^+})&=[\bar{T_{B_{w_k^+}}^*\fB}] + m_k[\bar{T_{B_{w_{k+1}^-}}^*\fB}], \;k=1,2,\dots,n-1,  \\
  CC(L_{w_n^+})&=[\bar{T_{B_{w_n^+}}^*\fB}],  \\
\intertext{and}
  CC(L_{w_k^-})&=[\bar{T_{B_{w_k^-}}^*\fB}], \; k=1,2,\dots,n.  \\
\end{align*}
  Note that since $\mu(T_{w_k^+})=\bar{\cO_1}$ and  the associated variety is $\bar{\cO_2}$, we must have $m_k>0$, by (\ref{eqn:av}).

We now show that $m_k=1$, for each $k=1,2,\dots,n$.
\begin{Lem}
Let $k=1,2,\dots,n-2$.
\begin{enumerate}[label=\upshape{(\alph*)}]
\item $\ga_{n-k}\notin \tau(w_k^+)=\tau(w_{k+1}^-)$ and $\ga_{n-k-1}\in \tau(w_k^+)=\tau(w_{k+1}^-)$, therefore $w_k^+,w_{k+1}^-$ are in the domain of $T_{\ga_{n-k},\ga_{n-k-1}}$.
\item $T_{\ga_{n-k},\ga_{n-k-1}}(w_k^+)=w_{k+1}^+$ and $T_{\ga_{n-k},\ga_{n-k-1}}(w_{k+1}^-)=w_{k+2}^-$.
\end{enumerate}
\end{Lem}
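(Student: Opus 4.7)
For part (a), the plan is to simply read off the $\tau$-invariants from Lemma \ref{lem:B1}(b). Since $1\leq k\leq n-2$, that lemma gives $\tau(w_k^+)=\tau(w_{k+1}^-)=\Pi\smallsetminus\{\ga_{n-k}\}$, and the simple root $\ga_{n-k-1}$ (which exists precisely because $k\leq n-2$) is distinct from $\ga_{n-k}$, hence lies in this $\tau$-invariant while $\ga_{n-k}$ itself does not. In type $B_n$ only $\ga_n$ is short, so both $\ga_{n-k}$ and $\ga_{n-k-1}$ are long; being adjacent simple roots they satisfy $\IP{\ga_{n-k}}{\ga_{n-k-1}}\neq 0$. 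Thus $w_k^+$ and $w_{k+1}^-$ lie in the domain of $\T_{\ga_{n-k},\ga_{n-k-1}}$.

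The heart of part (b) is the cycle-notation identity
\[
\tilde{w}_k^{-1}\,\tilde{w}_{k+1}=s_{\ga_{n-k}}.
\]
To prove it I would compose $\tilde{w}_{k+1}=(1,2,\dots,n-k)$ with the inverse of $\tilde{w}_k=(1,2,\dots,n-k+1)$: each $i\in\{1,\dots,n-k-1\}$ is fixed, $n-k\mapsto 1\mapsto n-k+1$, and $n-k+1\mapsto n-k+1\mapsto n-k$, so the product is the transposition $(n-k,\,n-k+1)$, which is $s_{\ga_{n-k}}$. Combined with $w_k^+=w_0\tilde{w}_k$, this yields $w_{k+1}^+=w_k^+\,s_{\ga_{n-k}}$; the analogous computation with $k$ replaced by $k+1$, together with $w_k^-=w_0\gs_{\eps_1}\tilde{w}_k$, yields $w_{k+2}^-=w_{k+1}^-\,s_{\ga_{n-k-1}}$ (the extra factor $\gs_{\eps_1}$ has no effect on the right-side cycle composition).

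Given these two right-multiplication identities, part (b) reduces to checking which of the two branches in the definition of $\T_{\ga\gb}$ is active in each case. For $w_k^+$ the first branch applies: Lemma \ref{lem:B1}(b) gives $\tau(w_{k+1}^+)=\Pi\smallsetminus\{\ga_{n-k-1}\}$, so $\ga_{n-k-1}\notin\tau(w_k^+ s_{\ga_{n-k}})$, forcing $\T_{\ga_{n-k},\ga_{n-k-1}}(w_k^+)=w_k^+ s_{\ga_{n-k}}=w_{k+1}^+$. For $w_{k+1}^-$ the second branch applies instead: $\tau(w_{k+2}^-)=\Pi\smallsetminus\{\ga_{n-k-1}\}$ contains $\ga_{n-k}$, so $\T_{\ga_{n-k},\ga_{n-k-1}}(w_{k+1}^-)=w_{k+1}^- s_{\ga_{n-k-1}}=w_{k+2}^-$. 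The only substantive step is the cycle composition; everything else is bookkeeping with $\tau$-invariants already computed in Lemma \ref{lem:B1}. As a consistency check, the length formulas $\ell(w_{k+1}^+)=\ell(w_k^+)+1$ and $\ell(w_{k+2}^-)=\ell(w_{k+1}^-)+1$ from Lemma \ref{lem:B1}(a) match the fact that in both cases the right multiplication is by a simple reflection across an ascent of the $\tau$-invariant.
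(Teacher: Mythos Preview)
Your proof is correct and supplies exactly the ``easy calculation'' that the paper omits (the paper merely says part (a) follows from Lemma~\ref{lem:B1} and part (b) is an easy calculation). The cycle computation $\tilde w_k^{-1}\tilde w_{k+1}=s_{\ga_{n-k}}$ is right, and your identification of which branch of the $\T_{\ga\gb}$ definition applies in each case is correct.

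One small slip in your final consistency check: for $w_{k+1}^-$ the right multiplication is by $s_{\ga_{n-k-1}}$ with $\ga_{n-k-1}\in\tau(w_{k+1}^-)$, so this is a \emph{descent}, and indeed Lemma~\ref{lem:B1}(a) gives $\ell(w_{k+2}^-)=\ell(w_{k+1}^-)-1$, not $+1$. This does not affect the proof itself, only your closing remark.
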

\noindent Part (a) is contained in the previous lemma and (b) is an easy calculation.
Applying Proposition \ref{prop:mcgovern} we conclude that $m_1=m_2=\cdots =m_{n-1}$.

\noindent {\bf Claim:} $m_1=1$.
 
The argument for this is to compute a normal slice in $Z_{w_1^+}$ at $B_{w_2^-}$ and compare with a known  characteristic cycle.  This argument was made in \cite[\S4]{BarchiniZierau17} and is  explained in some detail there.  

Let $S=\exp(\fnbar\cap\fnbar^{\,w_2^-})\cdot\fb$, were $\fb$ is the Borel subalgebra corresponding to $\gD^+$.  We need to compute the slice $S\cap Z_{w_1^+}$ and consider the singularity at $0$.

Suppose that $SO(2n+1,\C)$ is defined by the symmetric matrix 
$$
\begin{bmatrix}  &&&& 1 \\  &&& 1 & \\  
            && \iddots &&  \\  & 1 &&&  \\  1 &&&& 
\end{bmatrix},
$$                   
so $\f{so}(2n+1,\C)$ has Cartan subalgebra $\fh=\{{\rm diag}(t_1,t_2,\dots, t_n, 0, -t_n,\dots, -t_2, -t_1)\}$.  Then, by part (c) of Lemma \ref{lem:B1},  $\exp(\fnbar\cap\fnbar^{w_2^-})$ consists of matrices of the form 

$$
X=I_{2n+1} + \begin{bmatrix} 0 & 0  & 0 \\  x^t  & 0 & 0\\  0 & -x^\dagger & 0 \end{bmatrix}
+\frac12\begin{bmatrix}  0 & 0 & 0\\ 0 & 0 & 0 \\  -\IP{x}{x} & 0 & 0  \end{bmatrix},
$$
where $x=(x_1,x_2,\dots, x_{n+1},0,\dots,0)\in\C^{2n-1}$ and $x^\dagger=(0,\dots,0,x_{n+1},\dots,x_2,x_1)$ and $\IP{x}{x}=\sum x_jx_{2n-j}$.  For example, when $n=5$,
$$
\exp(\fnbar\cap\fnbar^{w_2^-})=\left\{\begin{bmatrix} 1& &&&&&&  \\ x_1 & 1 &&&&&&  \\
\vdots & & \ddots & & && \\  x_6 &&& 1 &&&&   \\  0 &&&& 1 &&& \\  0 &&&&& 1 && \\ 0 &&&&&& 1 & \\
 -\frac{\IP{x}{x}}{2} & 0 & 0 & 0 & -x_6 & \dots & -x_1 & 1 \end{bmatrix}\right\}.
$$
The slice is $S=\exp(\fnbar\cap\fnbar^{w_2^-})\cdot\fb$.  Viewing the Borel as the standard flag
$$\langle e_1 \rangle \subset  \langle e_1, e_2 \rangle \subset \cdots \subset \langle e_1, e_2,\dots,e_n \rangle,
$$
$S$ consists of  flags $F_1\subset F_2\subset \cdots \subset F_n$ with $F_j$ spanned by the first $j$ columns of matrices in 
$$
\exp(\fnbar\cap\fnbar^{w_2^-}) w_2^-\cdot I.
$$
Using the description of the Schubert varieties in type $B$ given in \cite{FultonPragacz98}, we may determine $S\cap Z_{w_1^+}$.  The realization we are using for $SO(2n+1,\C)$ gives a natural embedding of $W$ in $W(A_{2n})=S_{2n+1}$.  Then we may write
\begin{align*}
 w_1^+ &= (2n ,\,   2n-1 ,\,  \dots ,\,  n+3 ,\,  n+2 ,\,  2n+1 ,\, \,  n+1\, , \,\,\,  1 ,\, \hspace{14pt} n, \hspace{14pt}\,  n-1 ,\,  \dots ,\,  3 ,\,  2),  \\
   w_2^- &=  (2n ,\,  2n-1 ,\,  \dots ,\,  n+3 ,\hspace{12pt}  1 ,\hspace{16pt}  n+2 ,\,  \,  n+1, \, \,\,\,  n ,\,\,\,  2n+1 ,\,  n-1 ,\,  \dots ,\,  3 ,\,  2). 
\end{align*}
Following \cite{FultonPragacz98}, for any $w\in W$ (written in this form as a permutation in $S_{2n+1}$) we let $N_{p,q}(w)=\#\{i\leq p : w(i)\leq q\}$, then 
$$
Z_w=\{(F_j) : \dim(F_p\cap \langle e_1,\dots, e_q\rangle)\geq N_{p,q}(w), 1\leq p\leq 2n+1,1\leq q\leq n\}.
$$
Many of the inequalities are often redundant; in fact just one is needed for $w=w_1^+$.  We have
$$
Z_{w_1^+}=\{(F_j): \dim(F_{n+2}\cap\langle e_1\rangle)\geq N_{n+2,1}(w_1^+)=1 \}.
$$
In the $n=5$ example, elements of $S$ are
$$
\begin{bmatrix} &&& 1 &&& &&&& 0  \\
&&& x_1 & &&& &&&  1  \\    &&& x_ 2 &&&& && 1 &  \\   &&& x_ 3 &&&&& 1 &&  \\   
&&& x_4 &&& 1 & 0&&&  \\   &&& x_5 & & 1 &&&&&  \\   &&& x_6 & 1 &&&&&&  \\   
&& 1 & 0 &&&&&&  \\  & 1 && 0 &&&&&&&  \\   1 &&& 0 & &&&&&&  \\   
-x_1 & -x_2 & -x_3 &-\frac{\IP{x}{x}}{2} & -x_4 & -x_5 & -x_6 & 1 & 0 & 0 & 0 \\   
\end{bmatrix}.
$$
Such a matrix is in $Z_{w_1^+}$ if and only if the span of the  first $n+2=7$ columns contains $e_1$, i.e., $x_1=x_2=x_3=0$ and 
$$
\det\begin{bmatrix} x_4 & 0 & 0 & 1  \\   x_5 & 0 & 1 & 0  \\
                     x_6 & 1 & 0 & 0  \\   \frac{\IP{x}{x}}{2} & -x_4 & -x_5 & -x_6
    \end{bmatrix}=0.
$$
This hold if and only if $x_1=x_2=x_3=0$ and $\IP{x}{x}=0$.  For arbitrary $n$ the computation is virtually the same and we get
\begin{align*}
S\cap Z_{w_1^+}&=\{x\in \C^{n+1} : x_1=\cdots =x_{n-2}=0\text{ and }
  \IP{x}{x}=0\}  \\
  &=\{z\in \C^3 : z_1z_3+z_2^2=0\}
\end{align*}
This is the cone in $\C^3$.  It is known (e.g., the $\f{sl}(2,\C)$ case of \cite[Corollary 3.3]{EvensMirkovic99}) that the multiplicity of  the $IC$ sheaf of $ \cN$ at $\{0\}$ is $1$.  We conclude that $m_1=1$.

\begin{Thm}\label{thm:main-B}
For $G_\R=SO_e(2,2n-1)$ the characteristic cycles and leading term cycles of the highest weight Harish-Chandra modules are
\begin{align*}
  & CC(L_{w_k^+})=[\bar{T_{B_{w_{k}^+}}^*\fB}] + [\bar{T_{B_{w_{k+1}^-}}^*\fB}],
      1\leq k \leq n-1,  \\
  & LTC( L_{w_k^+} )=[\bar{T_{B_{w_k^+}}^*\fB}], 1 \leq k\leq  n-1.  \\
  & CC(L_{w_n^+})=LTC( L_{w_n^+} )=[\bar{T_{B_{w_n^+}}^*\fB}],   \\
  & CC(L_{w_k^-})= LTC( L_{w_k^-} )=[\bar{T_{B_{w_k^-}}^*\fB}], 1\leq k\leq n.  \\
\end{align*}
\end{Thm}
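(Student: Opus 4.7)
The plan is to assemble the analysis already carried out earlier in Section~\ref{sec:so} into a proof of the four-part statement. Using the general input from Section~2 together with parts (a)--(d) of Lemma~\ref{lem:B1}, I would first constrain the possible shape of each $CC(L_w)$. Theorem~\ref{thm:cc} forces the diagonal term $[\bar{T_{B_w}^*\fB}]$ with multiplicity one and restricts every other $B_y$ appearing to satisfy $B_y \subset \bar{B_w}$; Proposition~\ref{prop:tau} further requires $\tau(w) \subset \tau(y)$; and Observation~\ref{obs:av}, combined with the identification of $AV(L_w)$ as $\bar{\cO_0}$ (for $w = w_n^+$) or $\bar{\cO_2}$ (for the remaining $w \in \cW$), excludes any $B_y$ whose orbital variety has dimension exceeding $\dim AV(L_w)$. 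Matching these three constraints against the explicit tables of $\tau$-invariants, lengths, and moment-map images in Lemma~\ref{lem:B1} eliminates every candidate except, in the case $w = w_k^+$ with $1 \le k \le n-1$, the companion $y = w_{k+1}^-$, yielding
\[
CC(L_{w_k^+}) = [\bar{T_{B_{w_k^+}}^*\fB}] + m_k\,[\bar{T_{B_{w_{k+1}^-}}^*\fB}]
\]
with $m_k \in \Z_{\ge 0}$, together with the single-term expressions asserted for $w_n^+$ and for every $w_k^-$.

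Next I would establish $m_k \ge 1$ and then equality of the $m_k$'s. The lower bound is automatic from formula~(\ref{eqn:av}): since $AV(L_{w_k^+}) = \bar{\cO_2}$ while $\mu(\bar{T_{B_{w_k^+}}^*\fB}) = \bar{\cO_1}$ is properly smaller, the $B_{w_{k+1}^-}$ summand (which alone supplies $\bar{\cO_2}$) must be present. For equality I would apply Proposition~\ref{prop:mcgovern} to the pair of equal-length compact simple roots $(\ga_{n-k},\ga_{n-k-1})$ for $k = 1,\dots,n-2$. Part (b) of Lemma~\ref{lem:B1} places both $w_k^+$ and $w_{k+1}^-$ in the domain of $\T_{\ga_{n-k},\ga_{n-k-1}}$, and a direct reduced-expression computation gives $\T_{\ga_{n-k},\ga_{n-k-1}}(w_k^+) = w_{k+1}^+$ and $\T_{\ga_{n-k},\ga_{n-k-1}}(w_{k+1}^-) = w_{k+2}^-$, so Proposition~\ref{prop:mcgovern} yields $m_k = m_{k+1}$ and hence $m_1 = m_2 = \cdots = m_{n-1}$.

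The main obstacle is the base case $m_1 = 1$, which I would handle by an explicit normal-slice computation at the point $w_2^- \cdot \fb \in Z_{w_1^+}$ along the lines of \cite[\S4]{BarchiniZierau17}. Using the antidiagonal realization of $SO(2n+1,\C)$ and the description of $\fnbar \cap \fnbar^{\,w_2^-}$ coming from Lemma~\ref{lem:B1}(c), the slice $S = \exp(\fnbar \cap \fnbar^{\,w_2^-})\cdot \fb$ can be parametrized by $(x_1,\dots,x_{n+1}) \in \C^{n+1}$, and the flags in $S$ are read off from the columns of the corresponding matrix. The Schubert condition defining $Z_{w_1^+}$ reduces via the Fulton--Pragacz rank inequalities to the single requirement $\dim(F_{n+2} \cap \langle e_1 \rangle) \ge 1$; expanding the resulting determinantal identity on the parametrized slice I expect to obtain $x_1 = \cdots = x_{n-2} = 0$ together with the quadratic relation $\IP{x}{x} = 0$, so that
\[
S \cap Z_{w_1^+} \;\cong\; \{z \in \C^3 : z_1 z_3 + z_2^2 = 0\},
\]
which is the $\f{sl}(2,\C)$ nilpotent cone. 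By the $\f{sl}(2,\C)$ instance of \cite[Corollary~3.3]{EvensMirkovic99}, the $IC$ sheaf of this quadric cone has multiplicity one at the origin, so $m_1 = 1$ and the characteristic cycle formulas are complete.

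The leading-term cycle assertions follow by unwinding the definition $LT = \{\fQ' : \dim\mu(\bar{T_{\fQ'}^*\fB}) = \dim AV(L_w)\}$ in conjunction with Lemma~\ref{lem:B1}(d): one retains only those summands of $CC(L_w)$ whose moment-map image realizes the appropriate top dimension, and in the cases where $CC(L_w)$ is already irreducible the identity $LTC(L_w) = CC(L_w)$ is immediate.
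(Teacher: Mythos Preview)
Your approach is essentially identical to the paper's: you reduce the shape of each $CC(L_w)$ via Theorem~\ref{thm:cc}, Proposition~\ref{prop:tau}, and Observation~\ref{obs:av} against the data of Lemma~\ref{lem:B1}; force $m_k>0$ from (\ref{eqn:av}); equate all the $m_k$ using Proposition~\ref{prop:mcgovern} applied to $\T_{\ga_{n-k},\ga_{n-k-1}}$; and pin down $m_1=1$ by the same normal-slice computation identifying $S\cap Z_{w_1^+}$ with the $\f{sl}(2)$ nilpotent cone and invoking \cite{EvensMirkovic99}. This is exactly the argument the paper carries out in Section~\ref{sec:so}.

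One caution on the leading-term assertion for $L_{w_k^+}$, $1\le k\le n-1$: your stated recipe of retaining the summand whose moment-map image realizes the top dimension, combined with Lemma~\ref{lem:B1}(d), actually selects $[\bar{T_{B_{w_{k+1}^-}}^*\fB}]$ (the term with $\mu$-image $\bar{\cO_2}=AV(L_{w_k^+})$), not $[\bar{T_{B_{w_k^+}}^*\fB}]$ as printed in the theorem. This looks like a typo in the displayed formula rather than a defect in your reasoning, but as written your final paragraph does not literally yield the stated $LTC(L_{w_k^+})$.
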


\section{$Sp(2n,\R)$}\label{sec:sp} The characteristic cycles of highest weight Harish-Chandra modules were computed in \cite{BarchiniZierau17} in an inductive manner.  Here we state the result in a slightly more concise way.  

It is convenient to label the highest weight Harish-Chandra modules of infinitesimal character $\rho$ by clans.  In general, clans parametrize the $K$-orbits in $\fB$. A highest weight Harish-Chandra module is thus associated to the clan that specifies the closure of its support.  Of course, only certain $K$-orbit closures occur as the support of highest weight Harish-Chandra modules.  The set of  clans for such orbits will be denoted by $\Cl(n)$ (as in \cite{BarchiniZierau17}).  The elements of $\Cl(n)$ take the form 
$$
c=(c_1\,c_2\,c_3\, \dots\,c_n),\text{ with } c_i=+\text{ or }c_i\in\N.
$$
It is understood that the values of $c_i\in\N$ are immaterial, so two clans with natural numbers in the same places are considered the same.  Let $\fQ_c$ be the $K$-orbit corresponding to a clan $c\in\Cl(n)$.  We denote by $L_c$ the highest weight Harish-Chandra module of infinitesimal character $\rho$ having  $\bar{\fQ_c}$ as support.  It is  explained in \cite[\S2.2]{BarchiniZierau17} how to easily pass between $\Cl(n)$ and $\cW$, that is, how to determine the $w\in\cW$ so that $L_c$ has highest weight $-w\rho-\rho$.

In order to describe the characteristic cycles we first describe
\begin{equation*}
C_g^n(j):=\{c\in\Cl(n) : \mu(\bar{T_{\fQ_c}^*\fB})=\bar{\cO_j}\}, j=0,1,2,\dots,r;
\end{equation*}
we refer to this set as a `geometric cell'.  For each $c\in\Cl(n)$ define $(h_1,h_2,\dots,h_n)$ as follows:
\begin{equation*}
h_1=\begin{cases} 0, &c_n\in\N,  \\
                  1,  &c_n=+.
    \end{cases}
\end{equation*}
Then define the others inductively by
\begin{equation*}
  h_j=  \begin{cases}  h_{j-1},  &c_{n-j+1}\in \N,  \\
                       h_{j-1}+1,  &c_{n-j+1}=+\text{ and }h_{j-1}=j-1,  \\
                       h_{j-1}+2,  &c_{n-j+1}=+\text{ and }h_{j-1}\leq j-2.  \\
        \end{cases}
\end{equation*}
By Lemma 28 and equation (30) of \cite{BarchiniZierau17} $(c_{n-j+1}\,c_{n-j}\, \dots\, c_n)\in C_g^j(h_j)$.  (One should think of constructing $h$ from $c$ by working from right to left.)  For example, if $c=(1+2\;3+4++\;5)\in \Cl(9)$, then 
\begin{equation*}
\begin{array}{c|ccccccccc}
j & 9 & 8 & 7 & 6 & 5 & 4 & 3 & 2 &  1   \\
\hline
c &  1  & + & 2 & 3 & + & 4 & + & + &  5  \\
h_j& 7  & 7 & 5 & 5 & 5 & 3 & 3 & 2 &  0
\end{array}.
\end{equation*}      
(Note that $h_1=0$ since $c_9\in\N$, $h_2=2$ since $c_8=+$ and $h_1=0<j-2$,  $h_3=3$ since $c_7=+$ and $h_2=2=j-1$, etc.)  

The main result of \cite{BarchiniZierau17}, Theorem 70, gives $CC(L_c)$, for $c\in\Cl(n)$, as follows.  For $n=1$ the clans are $c=(+)$ and $c=(1)$, and $CC(L_c)$ is just $\bar{T_{\fQ_c}^*\fB}$ in each case.  For $n>1$, $c=(+\,c')$ or $c=(1\, c')$, for some $c'\in\Cl(n-1)$.  Then $CC(L_c)$ is described in terms of $c'$ as follows.

Let us define $D(c)$ by 
$$
CC(L_c)=\sum_{d\in D(c)}\bar{T_{\fQ_d}^*\fB}.
$$
Suppose that $D(c')$ is known, then
for $c=(+\,c')$,
\begin{subequations}\label{eqn:main}
\begin{align}
D(c)&=\{(+\,c_1'): c_1' \in D(c')\}, 
\intertext{and for $c=(1\,c')$ there are two cases:}
D(c)&=\{(1\,c_1'): c_1' \in D(c')\}, \text{ if $n$ is odd, or $n$ is even and }c'\notin C_g^{n-1}(n-1)\text{  and } \\
D(c)&=\{(1\,c_1'): c_1' \in D(c')\}\cup \{(+\,c_1'): c_1' \in D(c')\}, 
  \text{ if } n\text{ is even and }c'\in C_g^{n-1}(n-1).  
\end{align}
\end{subequations}

It follows that the characteristic cycle of $L_c$ is irreducible exactly when there is no $j=1,2,\dots,n-1$ so that 
\begin{subequations}\label{eqn:J}
\begin{equation}
   h_{j+1}=h_j=j, \text{ with }j\text{ odd }
\end{equation}
or equivalently, there is no $j=1,2,\dots,n-1$ so that 
\begin{equation}
   h_j=j\text{ and } c_{n-j}=\in \N \text{ with }j\text{ odd}
\end{equation}
\end{subequations}
(and necessarily $c_j=+$).
If we let $J(c)$ be the set of $j=1,2,\dots,n-1$ satisfying (\ref{eqn:J}), then the number of terms in $CC(L_c)$ is $2^{\#J(c)}$, in fact
$$
  D(c)=\{d\in \Cl(n): d_i=c_i\text{ if } i\notin J(c)\}.
$$
In the above example, $h_4=h_3=3$ and $h_6=h_5=5$, so
\begin{equation*}
 D(c):= \{(1+2\; 3+4++\,5),  (1+2\; 3+4\;5+ 6),  (1+2\; 3\;4\;5++\,6),  (1+2\; 3\;4\;5\;6+\,7)   \}
\end{equation*}
\begin{Example}  The most complicated characteristic cycles are for 
 \begin{equation*}
   c=\begin{cases}  (1+2+3\,\dots\,+l+),  &n=2l,  \\
                  (+1+2+3\,\dots\,+l+),  &n=2l+1.
     \end{cases}
   \end{equation*}
   There are $2^l$ terms in  $CC(L_c)$ in each case.
\end{Example}

The description of characteristic cycles given above allows us to prove the following proposition; the proof is given in Appendix \ref{appendix:B}.
\begin{Prop}\label{prop:count}  The number of irreducible highest weight Harish-Chandra modules of infinitesimal character $\rho$ having irreducible characteristic cycle is given by
 \begin{equation*}
  N(2l)={2l+1\choose l} \text{ and } N(2l+1)=2N(2l).
 \end{equation*}
When $n=2l$, the proportion of highest weight Harish-Chandra modules with irreducible characteristic cycle is 
\begin{equation*}
 \frac{N(2l)}{2^{2l}}=\frac{1}{2^{2l}} {2l+1\choose l}\sim \frac{2}{\sqrt{\pi}}\frac{1}{\sqrt{l}}.
\end{equation*}
\end{Prop}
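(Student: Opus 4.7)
The plan is to use the recursive description (\ref{eqn:main}) to derive a two-term recurrence for $N(n)$ and then identify the correction term as a Catalan number.

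First, $N(2l+1) = 2N(2l)$ follows directly from (\ref{eqn:main}): for $n$ odd the doubling clause cannot apply, so each good $c' \in \Cl(2l)$ lifts in exactly two ways to a good clan in $\Cl(2l+1)$. For $n = 2l$ even, (\ref{eqn:main}) shows $|D(c)|=1$ fails exactly when $c = (1\,c')$ with $c' \in C_g^{2l-1}(2l-1)$ good, so combined with the odd-case identity
\[
N(2l) = 4 N(2l-2) - M_l, \qquad M_l := \#\bigl\{c' \in \Cl(2l-1) : c' \text{ good and } c' \in C_g^{2l-1}(2l-1)\bigr\}.
\]

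The heart of the argument is to show $M_l = C_l$, the $l$-th Catalan number. Unpacking the recursion for $h$, the condition $h_{2l-1} = 2l-1$ forces $c'_1 = +$ and $h_{2l-2} \in \{2l-2,\,2l-3\}$ on the truncation $(c'_2,\dots,c'_{2l-1})$. Grouping the $2l-2$ letters of the truncation into $l-1$ consecutive pairs (read right-to-left), a direct check shows that every pair transition allowed in a good walk changes $k := j-h_j$ by an even amount---the excluded pair $(+,N)$ from $k=0$ being precisely the one that would create a departure at an even position---so $k_{2l-2}$ is even and $h_{2l-2}=2l-2$. Thus $M_l = g(l-1)$ where $g(l)$ counts good clans in $\Cl(2l)$ with $k_{2l}=0$. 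With the pair Markov chain in hand (from $k=0$: two stays and one up; from $k\ge 1$: one down, two stays, one up), each returning walk decomposes as alternating blocks of stays-at-$0$ (generating function $\tfrac{1}{1-2x}$) and positive excursions (each self-similar to a good returning walk, contributing $x^2 g(x)$), yielding
\[
x^{2}g(x)^{2}-(1-2x)g(x)+1 = 0.
\]
The power-series solution is $g(x)=C(x)^{2}$ where $C(x)=\sum_m C_m x^m$ is the Catalan generating function, and the convolution identity $C_{l+1}=\sum_{k=0}^l C_k C_{l-k}$ gives $g(l)=C_{l+1}$, hence $M_l=C_l$.

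Finally, the identities $C_l=\tfrac{2}{l+1}\binom{2l-1}{l-1}$ and $\binom{2l+1}{l}=\tfrac{2(2l+1)}{l+1}\binom{2l-1}{l-1}$ reduce $4\binom{2l-1}{l-1}-C_l=\binom{2l+1}{l}$ to a one-line check, so induction from $N(0)=1$ completes $N(2l)=\binom{2l+1}{l}$. The asymptotic follows from Stirling applied to $\binom{2l+1}{l}=\binom{2l}{l}\cdot\tfrac{2l+1}{l+1}\sim \tfrac{2\cdot 4^l}{\sqrt{\pi l}}$. The principal obstacle is the Catalan identification in the third paragraph: the paired transitions must be weighted correctly---the stays at $k=0$ come in two flavors $(+,+)$ and $(N,+)$---since a naive decomposition into single-letter blocks and excursions miscounts and yields the wrong generating function.
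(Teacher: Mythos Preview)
Your proof is correct and follows the same overall architecture as the paper's: both establish the recurrence $N(2l)=4N(2l-2)-C_l$ (with $C_l$ the $l$-th Catalan number) and then finish by induction and Stirling. The difference lies only in how the correction term is identified as Catalan. The paper sets up an explicit triangular lattice-path model for the refined counts $N(n,j)$ (good clans in $C_g^n(j)$), observes that the irreducibility condition prunes the array to one whose returns to the top row are classically Catalan (citing Stanley), and reads off $N(2l-2,2l-2)=C_l$ pictorially. You instead track the defect $k_j=j-h_j$, reduce to a weighted Motzkin walk (stay-weight $2$), and solve the quadratic $x^2g^2-(1-2x)g+1=0$ to obtain $g=C(x)^2$, hence $g(l-1)=C_l$. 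Your route is self-contained and makes the parity phenomenon ($k_{2p}$ is always even for good clans) explicit, at the cost of a generating-function computation; the paper's is more visual and leans on a known Catalan interpretation. One point you leave implicit but which deserves a sentence: the ``self-similarity'' of excursions works because at heights $\geq 2$ the forbidden down-step at the floor makes the effective local weights ($2$ stays, $1$ up) match those at height $0$, so the shifted walk really does have generating function $g$ again.
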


\vspace{1cm}


\appendix

\section{Data for the exceptional cases}\label{appendix:A}
Here we give the  relevant data for the computations of Section \ref{sec:exceptional}.  The elements of $\cW$  are given in Table 3 by specifying $w_1,\dots,w_{27}\in\cW$ for $E_6$ by giving each $w_i$ as the long element of $W_c$ followed by the product of the simple reflections listed.

\begin{table}[H]
\scalebox{.8}{\mbox{\hspace{-1.5cm}
$
\setlength{\extrarowheight}{3pt}
\begin{array}{ll | ll | ll}
i &  \text{Red. expr. for }w_i  \quad {}   &
i &  \text{Red. expr. for }w_i  \quad       & 
i & \text{Red. expr. for }w_i  \quad      \\ 
\hline  
1   &        & 
10  &     [ 1, 3, 4, 5, 6, 2]    & 
19  &     [ 1, 3, 4, 5, 6, 2, 4, 5, 3, 4]   \\
2   &     [1]     & 
11  &     [1, 3, 4, 5, 2, 4, 3   & 
20  &     [1, 3, 4, 5, 6, 2, 4, 5, 3, 4, 1]   \\
3   &     [1,3]    & 
12  &     [1, 3, 4, 5, 6, 2, 4]    & 
21  &     [ 1, 3, 4, 5, 6, 2, 4, 5, 3, 4, 2]      \\
4   &     [1,3,4]    & 
13  &     [1, 3, 4, 5, 2, 4, 3, 1]  & 
22  &     [1, 3, 4, 5, 6, 2, 4, 5, 3, 4, 2, 1]   \\
5   &     [1,3,4,2]    & 
14  &     [1, 3, 4, 5, 6, 2, 4, 3]    & 
23  &     [1, 3, 4, 5, 6, 2, 4, 5, 3, 4, 1, 3]    \\
6   &     [1,3,4,5]     & 
15  &     [1, 3, 4, 5, 6, 2, 4, 5]     & 
 24  &     [ 1, 3, 4, 5, 6, 2, 4, 5, 3, 4, 2, 1, 3]   \\
7  &     [1,3,4,5,2]    & 
16  &     [1, 3, 4, 5, 6, 2, 4, 3, 1]   & 
25  &     [1, 3, 4, 5, 6, 2, 4, 5, 3, 4, 2, 1, 3, 4]    \\
 8  &     [1,3,4,5,6]     & 
 17  &     [1, 3, 4, 5, 6, 2, 4, 5, 3]    & 
26  &     [1, 3, 4, 5, 6, 2, 4, 5, 3, 4, 2, 1, 3, 4, 5]  \\
9  &     [1, 3, 4, 5, 2, 4]  & 
18  &     [1, 3, 4, 5, 6, 2, 4, 5, 3, 1]   & 
27  &     [1, 3, 4, 5, 6, 2, 4, 5, 3, 4, 2, 1, 3, 4, 5, 6]   
\end{array}
$
}}
\medskip
\caption{$\cW$ for $E_6$}
\label{tab:w-e6}
\end{table}

\newpage
The next table specifies $w_1,\dots,w_{56}\in\cW$ for $E_7$, again by giving each $w_i$ as the long element of $W_c$ followed by the product of the simple reflections listed.

\begin{table}[H]\label{table:e7-w}
\scalebox{.8}{\mbox{\hspace{0cm}
$
\setlength{\extrarowheight}{3pt}
\begin{array}{ll|ll}
i       &  \text{Reduced expression for }w_i  \qquad\qquad   &  i       &  \text{Reduced expression for }w_i    \\
\hline  
 1    &         &  29    &     7, 6, 5, 4, 2, 3, 1, 4, 3, 5, 4, 2, 6, 5      \\ 
 2    &     7   &  30    &     7, 6, 5, 4, 2, 3, 1, 4, 3, 5, 4, 2, 6, 7  \\
 3    &     7, 6  &  31    &     7, 6, 5, 4, 2, 3, 1, 4, 3, 5, 4, 6, 5, 7   \\ 
 4    &     7, 6, 5  &  32    &     7, 6, 5, 4, 2, 3, 1, 4, 3, 5, 4, 2, 6, 5, 4  \\ 
 5    &     7, 6, 5, 4  &  33    &     7, 6, 5, 4, 2, 3, 1, 4, 3, 5, 4, 2, 6, 5, 7   \\ 
 6    &     7, 6, 5, 4, 2 &  34    &     7, 6, 5, 4, 2, 3, 1, 4, 3, 5, 4, 6, 5, 7, 6    \\ 
 7    &     7, 6, 5, 4, 3  &  35    &     7, 6, 5, 4, 2, 3, 1, 4, 3, 5, 4, 2, 6, 5, 4, 3    \\ 
 8    &     7, 6, 5, 4, 2, 3  &  36    &     7, 6, 5, 4, 2, 3, 1, 4, 3, 5, 4, 2, 6, 5, 4, 7    \\ 
 9    &     7, 6, 5, 4, 3, 1  &  37    &     7, 6, 5, 4, 2, 3, 1, 4, 3, 5, 4, 2, 6, 5, 7, 6   \\ 
10    &     7, 6, 5, 4, 2, 3, 1 &  38    &     7, 6, 5, 4, 2, 3, 1, 4, 3, 5, 4, 2, 6, 5, 4, 3, 1   \\ 
11    &     7, 6, 5, 4, 2, 3, 4 &  39    &     7, 6, 5, 4, 2, 3, 1, 4, 3, 5, 4, 2, 6, 5, 4, 3, 7    \\ 
12    &     7, 6, 5, 4, 2, 3, 1, 4 &  40    &     7, 6, 5, 4, 2, 3, 1, 4, 3, 5, 4, 2, 6, 5, 4, 7, 6  \\ 
13    &     7, 6, 5, 4, 2, 3, 4, 5 &  41    &     7, 6, 5, 4, 2, 3, 1, 4, 3, 5, 4, 2, 6, 5, 4, 3, 1, 7   \\ 
14    &     7, 6, 5, 4, 2, 3, 1, 4, 3 &  42    &     7, 6, 5, 4, 2, 3, 1, 4, 3, 5, 4, 2, 6, 5, 4, 3, 7, 6    \\ 
15    &     7, 6, 5, 4, 2, 3, 1, 4, 5  &  43    &     7, 6, 5, 4, 2, 3, 1, 4, 3, 5, 4, 2, 6, 5, 4, 7, 6, 5   \\ 
16    &     7, 6, 5, 4, 2, 3, 4, 5, 6  &  44    &     7, 6, 5, 4, 2, 3, 1, 4, 3, 5, 4, 2, 6, 5, 4, 3, 1, 7, 6    \\ 
17    &     7, 6, 5, 4, 2, 3, 1, 4, 3, 5   &  45    &     7, 6, 5, 4, 2, 3, 1, 4, 3, 5, 4, 2, 6, 5, 4, 3, 7, 6, 5 \\ 
18    &     7, 6, 5, 4, 2, 3, 1, 4, 5, 6  &  46    &     7, 6, 5, 4, 2, 3, 1, 4, 3, 5, 4, 2, 6, 5, 4, 3, 1, 7, 6, 5     \\ 
19    &     7, 6, 5, 4, 2, 3, 4, 5, 6, 7  &  47    &     7, 6, 5, 4, 2, 3, 1, 4, 3, 5, 4, 2, 6, 5, 4, 3, 7, 6, 5, 4  \\ 
20    &     7, 6, 5, 4, 2, 3, 1, 4, 3, 5, 4  &  48    &     7, 6, 5, 4, 2, 3, 1, 4, 3, 5, 4, 2, 6, 5, 4, 3, 1, 7, 6, 5, 4    \\ 
21    &     7, 6, 5, 4, 2, 3, 1, 4, 3, 5, 6  &  49    &     7, 6, 5, 4, 2, 3, 1, 4, 3, 5, 4, 2, 6, 5, 4, 3, 7, 6, 5, 4, 2    \\ 
22    &     7, 6, 5, 4, 2, 3, 1, 4, 5, 6, 7  &  50    &     7, 6, 5, 4, 2, 3, 1, 4, 3, 5, 4, 2, 6, 5, 4, 3, 1, 7, 6, 5, 4, 2   \\ 
23    &     7, 6, 5, 4, 2, 3, 1, 4, 3, 5, 4, 2  &  51    &     7, 6, 5, 4, 2, 3, 1, 4, 3, 5, 4, 2, 6, 5, 4, 3, 1, 7, 6, 5, 4, 3     \\ 
24    &     7, 6, 5, 4, 2, 3, 1, 4, 3, 5, 4, 6  &  52    &     7, 6, 5, 4, 2, 3, 1, 4, 3, 5, 4, 2, 6, 5, 4, 3, 1, 7, 6, 5, 4, 2, 3    \\ 
25    &     7, 6, 5, 4, 2, 3, 1, 4, 3, 5, 6, 7  &  53    &     7, 6, 5, 4, 2, 3, 1, 4, 3, 5, 4, 2, 6, 5, 4, 3, 1, 7, 6, 5, 4, 2, 3, 4   \\ 
26    &     7, 6, 5, 4, 2, 3, 1, 4, 3, 5, 4, 2, 6  & 54    &     7, 6, 5, 4, 2, 3, 1, 4, 3, 5, 4, 2, 6, 5, 4, 3, 1, 7, 6, 5, 4, 2, 3, 4, 5     \\ 
27    &     7, 6, 5, 4, 2, 3, 1, 4, 3, 5, 4, 6, 5  & 55    &     7, 6, 5, 4, 2, 3, 1, 4, 3, 5, 4, 2, 6, 5, 4, 3, 1, 7, 6, 5, 4, 2, 3, 4, 5, 6   \\
28    &     7, 6, 5, 4, 2, 3, 1, 4, 3, 5, 4, 6, 7  & 56    &     7, 6, 5, 4, 2, 3, 1, 4, 3, 5, 4, 2, 6, 5, 4, 3, 1, 7, 6, 5, 4, 2, 3, 4, 5, 6, 7    
\end{array}
$
}}
\smallskip
\caption{$\cW$ for $E_7$}
\end{table}

\newpage

The remaining two tables list the relevant data for the $E_7$ case.\\
\begin{table}[H]\label{table}\label{table:e7-data}
\scalebox{.84}{\mbox{\hspace{0cm}
$
\setlength{\extrarowheight}{3pt}
\begin{array}{llccc|clccc}
i& \quad \tau(w_i)         &  \dim(Z_w)  & AV & \text{possible CC} & 
\quad i\qquad& \quad \tau(w_i)         &  \dim(Z_w)  & AV & \text{possible CC}  \\
\hline  
1   &    \{1, 2, 3, 4, 5, 6\}  &      36  &            \cO_3  &                          \{1\}    &
29  &       \{1, 2, 3, 5, 6\}  &      50  &              \cO_2  &                     \{25, 29\}    \\ 
 2  &    \{1, 2, 3, 4, 5, 7\}  &      37  &            \cO_3  &                          \{2\}    &
30  &    \{1, 2, 3, 4, 6, 7\}  &      50  &              \cO_2  &                         \{30\}    \\ 
 3  &    \{1, 2, 3, 4, 6, 7\}  &      38  &            \cO_3  &                          \{3\}    &
31  &       \{1, 3, 4, 5, 7\}  &      50  &              \cO_2  &                     \{23, 31\}    \\ 
 4  &    \{1, 2, 3, 5, 6, 7\}  &      39  &            \cO_3  &                          \{4\}    & 
32  &       \{1, 2, 4, 5, 6\}  &      51  &              \cO_2  &                     \{22, 32\}    \\ 
 5  &       \{1, 4, 5, 6, 7\}  &      40  &            \cO_3  &                          \{5\}    & 
33  &       \{1, 2, 3, 5, 7\}  &      51  &              \cO_2  &                 \{23, 25, 33\}    \\ 
 6  &    \{1, 2, 4, 5, 6, 7\}  &      41  &            \cO_3  &                          \{6\}    & 
34  &    \{1, 3, 4, 5, 6, 7\}  &      51  &              \cO_2  &                         \{34\}    \\ 
 7  &       \{3, 4, 5, 6, 7\}  &      41  &            \cO_3  &                          \{7\}    & 
35  &       \{2, 3, 4, 5, 6\}  &      52  &              \cO_2  &                     \{19, 35\}    \\ 
 8  &       \{2, 3, 5, 6, 7\}  &      42  &            \cO_3  &                       \{4, 8\}    & 
36  &       \{1, 2, 4, 5, 7\}  &      52  &              \cO_2  &                 \{22, 23, 36\}    \\ 
 9  &    \{1, 3, 4, 5, 6, 7\}  &      42  &            \cO_3  &                          \{9\}    & 
37  &    \{1, 2, 3, 5, 6, 7\}  &      52  &              \cO_2  &                     \{25, 37\}    \\ 
10  &    \{1, 2, 3, 5, 6, 7\}  &      43  &            \cO_3  &                      \{4, 10\}    & 
38  &    \{1, 2, 3, 4, 5, 6\}  &      53  &              \cO_2  &                         \{38\}    \\ 
11  &       \{2, 3, 4, 6, 7\}  &      43  &            \cO_3  &                      \{3, 11\}    & 
39  &       \{2, 3, 4, 5, 7\}  &      53  &              \cO_2  &                 \{19, 23, 39\}    \\ 
12  &       \{1, 2, 4, 6, 7\}  &      44  &            \cO_3  &                   \{3, 6, 12\}    & 
40  &       \{1, 2, 4, 6, 7\}  &      53  &              \cO_2  &                 \{22, 30, 40\}    \\ 
13  &       \{2, 3, 4, 5, 7\}  &      44  &            \cO_3  &                      \{2, 13\}    & 
41  &    \{1, 2, 3, 4, 5, 7\}  &      54  &              \cO_2  &                     \{23, 41\}    \\ 
14  &    \{1, 2, 3, 4, 6, 7\}  &      45  &            \cO_3  &                      \{3, 14\}    & 
42  &       \{2, 3, 4, 6, 7\}  &      54  &              \cO_2  &                 \{19, 30, 42\}    \\ 
15  &       \{1, 2, 4, 5, 7\}  &      45  &            \cO_3  &                   \{2, 6, 15\}    &
43  &    \{1, 2, 4, 5, 6, 7\}  &      54  &              \cO_2  &                     \{22, 43\}    \\ 
16  &       \{2, 3, 4, 5, 6\}  &      45  &            \cO_3  &                      \{1, 16\}    & 
44  &    \{1, 2, 3, 4, 6, 7\}  &      55  &              \cO_2  &                     \{30, 44\}    \\ 
17  &       \{1, 2, 3, 5, 7\}  &      46  &            \cO_3  &               \{2, 4, 10, 17\}    &
45  &       \{2, 3, 5, 6, 7\}  &      55  &              \cO_2  &             \{19, 25, 37, 45\}    \\ 
18  &       \{1, 2, 4, 5, 6\}  &      46  &            \cO_3  &                   \{1, 6, 18\}    & 
46  &    \{1, 2, 3, 5, 6, 7\}  &      56  &              \cO_2  &                 \{25, 37, 46\}    \\ 
19  &    \{2, 3, 4, 5, 6, 7\}  &      46  &              \cO_2  &                         \{19\}   & 
47  &       \{3, 4, 5, 6, 7\}  &      56  &              \cO_2  &                 \{19, 34, 47\}    \\ 
20  &       \{1, 3, 4, 5, 7\}  &      47  &            \cO_3  &                   \{2, 9, 20\}    & 
48  &       \{1, 4, 5, 6, 7\}  &      57  &              \cO_2  &             \{22, 34, 43, 48\}    \\ 
21  &       \{1, 2, 3, 5, 6\}  &      47  &            \cO_3  &               \{1, 4, 10, 21\}    & 
49  &    \{2, 3, 4, 5, 6, 7\}  &      57  &              \cO_1  &                         \{49\}    \\ 
22  &    \{1, 2, 4, 5, 6, 7\}  &      47  &              \cO_2  &                         \{22\}   &
50  &    \{1, 2, 4, 5, 6, 7\}  &      58  &              \cO_1  &                         \{50\}    \\ 
23  &    \{1, 2, 3, 4, 5, 7\}  &      48  &              \cO_2  &                         \{23\}   & 
51  &    \{1, 3, 4, 5, 6, 7\}  &      58  &              \cO_1  &                         \{51\}    \\ 
24  &          \{1, 3, 4, 6\}  &      48  &            \cO_3  &           
 \{1, 3, 9, 14, 24\}    & 
 52  &    \{1, 2, 3, 5, 6, 7\}  &      59  &              \cO_1  &                         \{52\}    \\ 
25  &    \{1, 2, 3, 5, 6, 7\}  &      48  &              \cO_2  &                         \{25\}    & 
53  &    \{1, 2, 3, 4, 6, 7\}  &      60  &              \cO_1  &                         \{53\}    \\ 
26  &       \{1, 2, 3, 4, 6\}  &      49  &              \cO_2  &                         \{26\}    & 
54  &    \{1, 2, 3, 4, 5, 7\}  &      61  &              \cO_1  &                         \{54\}    \\ 
27  &       \{1, 3, 4, 5, 6\}  &      49  &              \cO_2  &                         \{27\}   &
55  &    \{1, 2, 3, 4, 5, 6\}  &      62  &              \cO_1  &                         \{55\}    \\ 
28  &       \{1, 3, 4, 6, 7\}  &      49  &              \cO_2  &                         \{28\}    & 
56  &  \{1, 2, 3, 4, 5, 6, 7\}  &      63  &             \cO_0  &                         \{56\}    
  \end{array}
$}}
\caption{$E_7$ Data}
\end{table}

\begin{table}[H]\label{table:e7-tab}
\scalebox{.84}{\mbox{\hspace{0cm}
$
\setlength{\extrarowheight}{3pt}
\begin{array}{cc|cc|cc|cc|cc|cc}
w & \T_{13}(w)       &  w & \T_{34}(w)       &w & \T_{24}(w)       &w & \T_{45}(w)       &w & \T_{56}(w)   &w & \T_{67}(w)            \\
\hline  
               7  &   4  &   5  &   4  &   5  &   4  &   4  &   3  &   3  &   2  &   2  &   1  \\
            8  &   8  &   6  &   8  &   7  &   8  &   8  &   11  &   11  &   13  &   13  &   16  \\
         11  &   10  &   12  &   10  &   9  &   10  &   10  &   12  &   12  &   15  &   15  &   18  \\
         13  &   17  &   15  &   17  &   20  &   17  &   17  &   14  &   14  &   17  &   17  &   21  \\
         16  &   21  &   18  &   21  &   24  &   21  &   21  &   24  &   24  &   20  &   20  &   24  \\
         19  &   25  &   22  &   25  &   27  &   29  &   25  &   28  &   26  &   23  &   23  &   26  \\
         35  &   29  &   32  &   29  &   28  &   25  &   29  &   26  &   28  &   31  &   31  &   27 \\
         39  &   33  &   36  &   33  &   31  &   33  &   33  &   30  &   30  &   33  &   33  &   29  \\
         42  &   37  &   40  &   37  &   34  &   37  &   37  &   40  &   40  &   36  &   36  &   32  \\
         45  &   45  &   43  &   45  &   47  &   45  &   45  &   42  &   42  &   39  &   39  &   35  \\
         47  &   46  &   48  &   46  &   48  &   46  &   46  &   44  &   44  &   41  &   41  &   38  \\
         49  &   52  &   50  &   52  &   51  &   52  &   52  &   53  &   53  &   54  &   54  &   55  

\end{array}
$

}  }
\caption{$\T_{\ga,\gb}$'s for $E_7$}
\end{table}
\medskip

\section{}\label{appendix:B}

The proof of Proposition \ref{prop:count} is now given.  Recall that $G_\R=Sp(2n,\R)$ and the characteristic cycles are described in \S\ref{sec:sp}.  The proposition counts  the number of highest weight Harish-Chandra modules of infinitesimal character $\rho$ having irreducible characteristic cycle.  Let $N(n)$ be the number of such representations for $Sp(2n,\R)$.

We consider an array as follows.

\scalebox{.9}{\mbox{\hspace{1.5cm}
$
\begin{array}{ccllccccccc}
 \text{col}\#  & n &\quad  \dots && 5
 & 4 & 3 & 2 & 1 & 0     \\
\hline
 & n & \quad\dots\quad && 5 & 4 & 3 & 2 & 1 & 0  \\  
 & n-1 & \quad \dots && 4 & 3 & 2 & 1 & 0  &  \\  
 & n-2 & \quad \dots  && 3 & 2 & 1 & 0  &  &  \\  
 & n-3 & \quad \dots  && 2  & 1 & 0  &     &  & \\  
 & n-4 & \quad \dots  &&  1  & 0 &  &  &  &  \\  
 & n-5 & \quad \dots  &&   0  &  &  &  &  &  \\  
 & \vdots &  &  \iddots &&  &  &  &   & \\  

 & 1 & 0 &   &&  &  &  &   &  \\  
 & 0\quad &   &   &&  &  &  &   &   
 \end{array}
$
}}

\noindent Note that the columns are numbered from right to left.   As clans for $Sp(2n,\R)$ are built up from clans for $Sp(2(n-1),\R)$ by $c'\leadsto(+\,c')$ or $(1\,c')$, the number of elements in $C_g^m(j)$ is the number of paths connecting entries of the array so that the path
\begin{enumerate}[label=\upshape{(\roman*)}]
 \item starts at the $0$ in column $\#0$,
 \item ends at the entry labelled with $j$ in column $\#m$,
 \item goes west or southwest from any top row entry and goes northwest or southwest from any other entry.
\end{enumerate}
The southwest moves represent $c'\leadsto (1\,c')$ and the west/northwest moves represent $c'\leadsto(+\,c')$.  This fact is a restatement of \cite[Lemma 28 and equation (30)]{BarchiniZierau17}.

By (\ref{eqn:main}), $CC(L_c)$ is irreducible if and only if the path corresponding to $c$ does not pass through the entry labelled with $2k-1$ in column $\#2k$, for any $k$.  But such a path cannot pass through \emph{any} odd label, other than those in the first row.  We therefore see that when $n$ is even,  the number of $c\in C_g^n(j)$ with $CC(L_c)$ irreducible is the number of paths (as described above) through 

\scalebox{.9}{\mbox{\hspace{1.5cm}
$\begin{array}{crcllcccccccccc}
 \text{col}\#  & n &&\quad  \dots & 5 &  & 4 &  & 3 &  & 2 &  & 1 &     &  0   \\
\hline
 & n & \leftarrow  && 5 & \leftarrow & 4  & \leftarrow & 3 & \leftarrow & 2 & \leftarrow & 1 & \leftarrow & 0  \\  
& & \nwarrow &   &  & \swarrow &  &  \nwarrow &  & \swarrow &  & \nwarrow  &  & \swarrow &     \\  
& &  &   & 4 &  &  &   & 2 &  &  &  & 0 &  &     \\  
&  &\swarrow &         &  & \nwarrow &  & \swarrow  &  & \nwarrow &  & \swarrow &  &  &     \\  
&   n-2  & &   &  &  & 2 &   &  &  & 0 &  &  &  &     \\  
& &  \nwarrow        &   &  &  \swarrow &  &   \nwarrow &  & \swarrow &  &  &  &  &     \\  
& &          &   &  &  &                 &  &  &  &  &  &  &  &     \\  
& &   \vdots       &   &  &   & & \iddots  &  &  &  &  &  &  &     \\  
& &          &   &  &  &  &   &  &  &  &  &  &  &     \\  
& & \swarrow &   &  &   &  &   &  &  &  &  &  &  &     \\  
& 2 &  &   &   &         &  &   &  &  &  &  &  &  &     \\  
& & \nwarrow &   \iddots &  &  &  &   &  &  &  &  &  &  &     \\  
& &  &    &  &  &  &   &  &  &  &  &  &  &     \\  
& & \swarrow &    &  &  &  &   &  &  &  &  &  &  &     \\  
& 0 &  &    &  &  &  &   &  &  &  &  &  &  &     
 \end{array}
$
}}

\noindent Rearranging slightly (by moving the odd entries up  and inserting  a node on the far right) this is the number of paths through 

\scalebox{.9}{\mbox{\hspace{1.5cm}
$\begin{array}{ccc ccc ccc ccc ccc ccc ccc}
 &  & \bullet & &&&&&  &    & \bullet &  &  &  & \bullet  &  &  &  &  \bullet \\  
 & \swarrow &      &&& &    && & \swarrow &   & \nwarrow &  & \swarrow &  & \nwarrow &  & \swarrow &   \\  
 \bullet & &&  &  &&&&   \bullet &  &  &  & \bullet  &  &  &  &  \bullet & &   \\  
 & \nwarrow &       &   & &&&&&  \nwarrow &   & \swarrow &  & \nwarrow &  & \swarrow &  &  &   \\  
 &  & \bullet & &&&\dots&&  &    & \bullet &  &  &  & \bullet  &  &  &  &   \\  
 & \swarrow &      &&& &   & &&  \swarrow &   & \nwarrow &  & \swarrow &  & &  & &   \\  
 \bullet & &&  &  &&&&   \bullet &  &  &  & \bullet  &  &  &  &  & &   \\  
 & \nwarrow &       &   & &&&&&  \nwarrow &   & \swarrow &  &  &  &  &  &  &   \\  
 &  & \bullet & &&&\dots&&  &    & \bullet &  &  &  &   &  &  &  &   \\  
 & \swarrow &       &&&&   & &&  \swarrow &   &  &  &  &  &  &  &  &   \\  
 \bullet &    &&&&&   && \bullet &  &  &  &   &  &  &  &  & &   \\  
 & \vdots     &&&&&   & \iddots& & &  &       &   & && &  \\
 \bullet &    &&&&&   \iddots&  & & &  &       &   & && &  \\
  & \nwarrow &       &&&   & && & &  &       &   & && &  \\
 &  & \bullet & &&  &&&  &  & &  &  &  &   &  &  &  &   \\  
 & \swarrow &       &&&   & && & &   &  &  &  &  & &  &  &   \\  
 \bullet & &&  &  &&&  &  &  &  &  &   &  &  &  &   & &   \\  
 \end{array}
$
}}

\noindent with $l+1$ nodes across the top.

Let $N(n,j)$ be the number of $c\in C_g^n(j)$ so that $CC(L_c)$ is irreducible.  Then $N(n)=\sum_{j=1}^n N(n,j)$.  When $n$ is even
$N(n,j)$ is the number of paths in the above diagram from the upper rightmost node to  node $j$ in column $\#n$.

\begin{Lem}
For $n=2l$, $\displaystyle N(n,n)=\frac{1}{l+2}{2l+2\choose l+1}$, the Catalan number $c(l+1).$
\end{Lem}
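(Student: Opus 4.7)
My plan is to exhibit a bijection between the lattice paths counted by $N(2l,2l)$ and Dyck paths of semilength $l+1$. The lemma then follows from the classical enumeration of Dyck paths, which gives exactly $c(l+1) = \frac{1}{l+2}\binom{2l+2}{l+1}$.

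First I would make the rearranged zigzag diagram precise: $l+1$ vertices on the ``top'' row and $l+1$ vertices on the ``bottom'' row, placed at alternating columns, with NW/SW edges between consecutive columns.  A path from the start (the upper-rightmost vertex) to the target (the upper-leftmost vertex, corresponding to the label $n$ in column $\#n$) is a sequence of moves, each advancing one column to the west and flipping between the two rows.  I would verify carefully how the irregular extremes of the diagram---the auxiliary bullet inserted on the far right and the truncation at the lower left---determine the forced first and last steps of any admissible path.

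Second, I would encode each path as a $\pm 1$ sequence $(s_1,\ldots,s_N)$ by setting $s_i = +1$ if the $i$th move lands on the top row and $s_i=-1$ if it lands on the bottom row.  The boundary conditions (starting and finishing in the top row) force an equal number of $+1$s and $-1$s; moreover, the local availability of NW and SW arrows at each interior vertex is equivalent to the condition that every partial sum $s_1 + \cdots + s_j$ is nonnegative.  This is exactly the Dyck-path condition, and the length $N$ works out to $2(l+1)$.  The standard count $c(l+1)=\tfrac{1}{l+2}\binom{2l+2}{l+1}$ then finishes the proof.

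The main obstacle I expect is the second step: correctly translating the local NW/SW structure of the zigzag into the ballot condition, and tracking the boundary contributions at the far-right and far-left ends so that the encoded sequence has length exactly $2(l+1)$ and starts and ends at height zero.  Once this bookkeeping is nailed down, the Catalan enumeration is immediate.  An alternative I could pursue, should the bijection prove cumbersome to write down, is a direct recursion: decomposing a path according to its first return to the top row gives a convolution $a_{l+1} = \sum_{i=0}^{l} a_i a_{l-i}$ with $a_0=1$ (where $a_l := N(2l,2l)$), which is the defining recurrence for the Catalan numbers and yields the same conclusion.
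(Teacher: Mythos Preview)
Your plan has the right destination (a Dyck-path/Catalan bijection), but your description of the diagram is wrong in a way that would make the written argument fail. The rearranged array is \emph{not} a two-row zigzag. It is a triangular array: $l+1$ nodes on the top level, $l+1$ on the next level, and then tapering downward through levels $2,3,\dots$ (see the $n=4$ example, which has four levels). Consequently a step does not merely ``flip between the two rows''; it moves the height (i.e., the level index) by $\pm 1$. Your proposed encoding, $s_i=+1$ if the $i$th move lands on the top row and $-1$ otherwise, is therefore not well-defined: in a genuine two-row zigzag it would be a fixed alternating sequence independent of the path, and in the actual triangular diagram there are more than two levels.

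Two related details also need correction. First, the target node carries the even label $n$, so after the rearrangement it sits at height $1$ (the second level), not in the top row. Second, the path length is $2l+1$, not $2(l+1)$: the start (height $0$) has only a SW edge, so the first step is forced to height $1$, and one then takes $2l$ further steps back to height $1$. The right encoding is simply $s_i=+1$ for a SW step and $s_i=-1$ for a NW step; the constraint ``never above the top row'' becomes the ballot condition, and one counts $\{\pm1\}$-sequences of length $2l+1$ with all partial sums $\geq 0$ and final sum $1$, which by reflection equals $\binom{2l+1}{l+1}-\binom{2l+1}{l+2}=C_{l+1}$.

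By comparison, the paper's own proof is one line: it observes that for the specific target (the top node in the leftmost column) one may discard the irrelevant lower-left portion of the zigzag, leaving exactly a standard Catalan triangle with $l+1$ nodes along the top, and then cites Stanley for the count $\tfrac{1}{l+2}\binom{2l+2}{l+1}$. Your bijection, once reformulated for the triangular array as above, is precisely the argument behind that citation, so the approaches coincide after your diagram is fixed. Your alternative ``first-return'' recursion also works once stated for the triangle (decompose at the first return to height $0$ after the forced initial SW step); it yields the usual Catalan convolution.
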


\begin{proof}
 To count the paths from the rightmost node to the top node in the leftmost column, we may omit part of the array and consider only

\scalebox{.9}{\mbox{\hspace{1.5cm}
$\begin{array}{ccc ccc ccc ccc ccc ccc ccc}
 &  & \bullet & &&&&&  &    & \bullet &  &  &  & \bullet  &  &  &  &  \bullet \\  
 & \swarrow &      &&& &    && & \swarrow &   & \nwarrow &  & \swarrow &  & \nwarrow &  & \swarrow &   \\  
 \bullet & &&  &  &&&&   \bullet &  &  &  & \bullet  &  &  &  &  \bullet & &   \\  
 & \nwarrow &       &   & &  \dots &&&&  \nwarrow &   & \swarrow &  & \nwarrow &  & \swarrow &  &  &   \\  
 &  & \bullet & &&&&&  &    & \bullet &  &  &  & \bullet  &  &  &  &   \\  
 &  &      &&& &   & &&  \swarrow &   & \nwarrow &  & \swarrow &  & &  & &   \\  
 & &&  &  &&&&   \bullet &  &  &  & \bullet  &  &  &  &  & &   \\  
 &  &       &   &&\nwarrow  && \swarrow     &&  \nwarrow &   & \swarrow &  &  &  &  &  &  &   \\  
 &  &  &&&&  \bullet &&  &    & \bullet &  &  &  &   &  &  &  &   \\  
 &  &       &&&&   &  \nwarrow &&  \swarrow &   &  &  &  &  &  &  &  &   \\  
 &    &&&&&   && \bullet &  &  &  &   &  &  &  &  & &   \\  
 \end{array}
$
}}  \\
with $l+1$ nodes along the top.  The number of such paths is $\frac{1}{l+2}{2l+2\choose l+1}$ (see e.g., \cite[Ch.~6, page 221]{Stanley99}).
\end{proof}
For example, when $n=4$ ($l=2$) the array is 

 \scalebox{.9}{\mbox{\hspace{1.5cm}
 $\begin{array}{c ccc ccc ccc ccc}
   &    & \bullet &  &  &  & \bullet  &  &  &  &  \bullet \\  
  & \swarrow &   & \nwarrow &  & \swarrow &  & \nwarrow &  & \swarrow &   \\  
    \bullet &  &  &  & \bullet  &  &  &  &  \bullet & &   \\  
 &  \nwarrow &   & \swarrow &  & \nwarrow &  & \swarrow &  &  &   \\  
   &    & \bullet &  &  &  & \bullet  &  &  &  &   \\  
  &   &   & \nwarrow &  & \swarrow &  & &  & &   \\  
  &  &  &  & \bullet  &  &  &  &  & &   \\  
 \end{array}.
$
}} \\
There are five paths.

It is easy to check that 
\begin{equation*}
 N(n,j)=N(n-1,j) + N(n-1,j-2), 0\leq j \leq n-1,
\end{equation*}
for any $n$.  This is necessarily $0$ if $j$ is odd.  Also
\begin{equation*}
 N(n,n)=\begin{cases}
         N(n-1,n-1)+N(n-1,n-2), & n\text{ even},  \\
         N(n-1,n-1),  & n\text{ odd}.
        \end{cases}
\end{equation*}
Using this we easily have the following.
\begin{Lem}
 For any $n$
 \begin{equation*}
 N(n)=\begin{cases}
         2N(n-1), & n\text{ odd},  \\
        4N(n-2)-N(n-2,n-2),  &n \text{ even}.
        \end{cases}
\end{equation*}
\end{Lem}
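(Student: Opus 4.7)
The plan is to apply the two recursions already recorded, namely
\[
N(n,j) = N(n-1,j) + N(n-1,j-2) \quad (j \text{ even}, \ 0\le j < n),
\]
together with the boundary formula $N(n,n) = N(n-1,n-1)+N(n-1,n-2)$ for $n$ even and $N(n,n)=N(n-1,n-1)$ for $n$ odd, and the vanishing $N(m,j)=0$ whenever $j$ is odd and $j<m$ (this follows from (\ref{eqn:J}), since a non-top node with odd label is a forbidden position in the path diagram). I would directly sum $N(n) = \sum_j N(n,j)$ and reduce it to an expression involving $N(n-1)$ and $N(n-1,n-1)$.

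Writing
\[
N(n) = N(n,n) + \sum_{\substack{0 \le j < n \\ j \text{ even}}} \bigl[N(n-1,j) + N(n-1,j-2)\bigr],
\]
and reindexing the second half by $k=j-2$, one gets two sums of $N(n-1,k)$ over even $k$ that agree except for boundary terms at their top ends. Reassembling them as copies of $N(n-1)$ (which includes the top-row contribution $N(n-1,n-1)$, of odd or even label depending on parity), one obtains
\[
N(n) = N(n,n) + 2N(n-1) - N(n-1,n-1) - \delta \cdot N(n-1,n-2),
\]
where $\delta=1$ if $n$ is even and $\delta=0$ if $n$ is odd (this $\delta$ reflects whether $n-2$ appears among the summed even $j$).

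For $n$ odd, the boundary formula $N(n,n)=N(n-1,n-1)$ exactly cancels the $-N(n-1,n-1)$ correction, giving $N(n)=2N(n-1)$. For $n$ even, $N(n,n)=N(n-1,n-1)+N(n-1,n-2)$ cancels the $-N(n-1,n-2)$ term but leaves $-N(n-1,n-1)$ behind, so $N(n)=2N(n-1)-N(n-1,n-1)$. The odd case just proved gives $N(n-1)=2N(n-2)$, and a further application of the boundary recursion combined with the vanishing ($N(n-2,n-3)=0$ since $n-3$ is odd and $<n-2$) yields $N(n-1,n-1)=N(n-2,n-2)+N(n-2,n-3)=N(n-2,n-2)$. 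Substituting completes the identity $N(n)=4N(n-2)-N(n-2,n-2)$. The whole argument is elementary index bookkeeping; the only real pitfall is tracking which of $n-1$ or $n-2$ is the largest even $j<n$, which is exactly what distinguishes the two parity cases and determines which boundary terms fully cancel.
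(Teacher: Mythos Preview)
Your approach is exactly what the paper has in mind: sum the recursion $N(n,j)=N(n-1,j)+N(n-1,j-2)$ over even $j<n$, reassemble as copies of $N(n-1)$, and handle the boundary term $N(n,n)$ separately. The paper simply writes ``using this we easily have the following'' and leaves this bookkeeping to the reader.

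There is, however, a slip in your displayed intermediate formula. For $n$ even, $n-1$ is odd, so $N(n-1)=N(n-1,n-1)+\sum_{j\text{ even},\,j\le n-2}N(n-1,j)$; reassembling \emph{each} of the two sums $\sum_{j\text{ even},\,j\le n-2}N(n-1,j)$ and $\sum_{k\text{ even},\,k\le n-4}N(n-1,k)$ as a copy of $N(n-1)$ therefore costs one $N(n-1,n-1)$ apiece, yielding
\[
N(n)=N(n,n)+2N(n-1)-2\,N(n-1,n-1)-N(n-1,n-2),
\]
not $-N(n-1,n-1)$. Your own verbal account (``leaves $-N(n-1,n-1)$ behind'') is in fact consistent with this corrected coefficient, and your conclusion $N(n)=2N(n-1)-N(n-1,n-1)$ is right. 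One further cosmetic point: for $N(n-1,n-1)$ with $n-1$ odd you should invoke the odd-case boundary formula $N(n-1,n-1)=N(n-2,n-2)$ directly, rather than the even-case version; the extra term $N(n-2,n-3)$ you wrote happens to vanish, so the outcome is unaffected. With these small fixes the argument is complete and matches the paper.
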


Now we prove the proposition.  The formula $N(2l)={2l+1 \choose  l}$ is proved by induction.  The $l=1$ case is immediate by counting paths (or by referring to \cite{BarchiniZierau17}).  Assume the formula for $l-1$ in place of $l$.
\begin{align*}
 N(2l)&=  4N(2l-2)-N(2l-2,2l-2)  \\
    &=4{2l-1\choose l-1}  - c(l)  \\
    &= \frac{4(2l-1)!}{(l-1)!\,l!}  -  \frac{1}{l+1}\frac{(2l)!}{l!\,l!}  \\
    &={2l+1 \choose l}.
\end{align*}

Since the number of clans in $\Cl(n)$ is $2^n$, the fraction of those for $n=2l$ having irreducible characteristic cycle is 
$$\frac{1}{2^{2l}}{2l+1\choose l}.
$$
Since $m!\sim\sqrt{2\pi m}\left(\frac{m}{e}\right)^m$ (Stirling's formula), $(m+1)^{m+1}\sim e\, m^{m+1}$ and $\sqrt{2m+1}\sim\sqrt{2}\sqrt{m+1}$ we easily obtain 
\begin{equation*}
 \frac{1}{2^{2l}}{2l+1 \choose l}  \sim \frac{2}{\sqrt{\pi}}\frac{1}{\sqrt{l}}.
\end{equation*}

\vspace{-14pt}\hfill{$\square$}

\providecommand{\bysame}{\leavevmode\hbox to3em{\hrulefill}\thinspace}
\providecommand{\MR}{\relax\ifhmode\unskip\space\fi MR }
\providecommand{\MRhref}[2]{%
  \href{http://www.ams.org/mathscinet-getitem?mr=#1}{#2}
}
\providecommand{\href}[2]{#2}


\end{document}